\newcommand \ifdraw {\iffalse}
\newcommand \Mset  {\mathfrak{M}}     
\newcommand \M  {\mathcal{M}}       
\newcommand \Mirr  {\mathcal{M}^{(Ir)}}  
\newcommand \MIS  {\mathcal{M}^{(IS)}} 
\newcommand \MS  {\mathcal{M}^{(S)}} 
\newcommand \A  {\mathcal{A}} 
\newcommand \gfM  {\phi}
\newcommand \gfMS  {\phi_{(S)}}
\newcommand \gfMIS  {\phi_{(IS)}}
\newcommand \gfMIrr  {\phi_{(Ir)}}
\newcommand \Submnd{\operatorname{Sub}}
\newtheorem{thm}{Theorem}
\newtheorem{prop}{Proposition}
\newtheorem{lemma}{Lemma}
\newtheorem{corollary}{Corollary}
\theoremstyle{definition}
\newtheorem{definition}{Definition}
\theoremstyle{remark}
\newtheorem{remark}{Remark}
\newtheorem{example}{Example}
\title[Prime Factorization of Meanders]{Prime Factorization of Meanders}
\author{Yury Belousov}
    \thanks{The work is supported by the Ministry of Science and Higher Education of the Russian Federation (agreement no. 075-15-2025-343).}
\address{Yury Belousov\\
Saint Petersburg State University}
\email{bus99@yandex.ru}
\begin{document}
\begin{abstract}
This paper introduces a prime factorization of open meanders, articulated through the framework of 2-colored operads. We demonstrate that each open meander can be canonically constructed from building blocks of two types: iterated snakes and irreducible meanders. We find out that iterated snakes allow efficient enumeration, and thus the problem of enumerating meanders reduces to the problem of enumerating irreducible meanders. Additionally, we present some results concerning the asymptotics of meanders of both classes.

\smallskip
\noindent \textbf{Keywords:} meander,
    operad, 
    plane curve,
    low-dimensional topology.
\end{abstract}

\maketitle

\section{Introduction}
A meander is a configuration of a pair of simple curves in a disk (the formal definitions are given in Section~\ref{sec: basic definitions}). 
Examples of meanders can be viewed in Figure~\ref{fig: examples of meanders} and throughout this paper.

\begin{figure}[h]
    \centering
    \begin{tikzpicture}[scale = 1.3]
        \draw[thick] (0, 0) to (3, 0);
        \draw[ultra thick] (0.15629, 0.666667) to [out = 0, in = 90, distance = 12.5664] (1, 0)
         to [out = -90, in = -90, distance = 12.5664] (2, 0)
        to [out = 90, in = 180, distance = 12.5664] (2.84371, 0.666667, 0);
        \draw[help lines] (1.5, 0) circle (1.5);
        \draw[fill] (0.15629, 0.666667) circle (0.05);
        \draw[fill] (2.84371, 0.666667) circle (0.05);
        \draw[fill] (0, 0) circle (0.05);
        \draw[fill] (3, 0) circle (0.05);
    \end{tikzpicture}
    \hspace{1.5cm}
    \begin{tikzpicture}[scale = 1.3]
        \draw[thick] (0, 0) to (3, 0);
        \draw[ultra thick] (0.163915, 0.681818) to[out = 0, in = 90, distance = 5.14079] (0.409091, 0)
         to[out = -90, in = -90, distance = 1.7136] (0.545455, 0)
         to[out = 90, in = 90, distance = 5.14079] (0.954545, 0)
         to[out = -90, in = -90, distance = 11.9952] (1.90909, 0)
         to[out = 90, in = 90, distance = 1.7136] (2.04545, 0)
         to[out = -90, in = -90, distance = 15.4224] (0.818182, 0)
         to[out = 90, in = 90, distance = 1.7136] (0.681818, 0)
         to[out = -90, in = -90, distance = 18.8496] (2.18182, 0)
         to[out = 90, in = 90, distance = 5.14079] (1.77273, 0)
         to[out = -90, in = -90, distance = 1.7136] (1.63636, 0)
         to[out = 90, in = 90, distance = 8.56798] (2.31818, 0)
         to[out = -90, in = -90, distance = 25.7039] (0.272727, 0)
         to[out = 90, in = 90, distance = 1.7136] (0.136364, 0)
         to[out = -90, in = -90, distance = 29.1311] (2.45455, 0)
         to[out = 90, in = 90, distance = 11.9952] (1.5, 0)
         to[out = -90, in = -90, distance = 5.14079] (1.09091, 0)
         to[out = 90, in = 90, distance = 22.2767] (2.86364, 0)
         to[out = -90, in = -90, distance = 1.7136] (2.72727, 0)
         to[out = 90, in = 90, distance = 18.8496] (1.22727, 0)
         to[out = -90, in = -90, distance = 1.7136] (1.36364, 0)
         to[out = 90, in = 90, distance = 15.4224] (2.59091, 0)
        to[out = -90, in = 180, distance = 5.14079] (2.72643, -0.863636, 0);
        \draw[help lines] (1.5, 0) circle (1.5);
        \draw[fill] (0.163915, 0.681818) circle (0.05);
        \draw[fill] (2.72643, -0.863636) circle (0.05);
        \draw[fill] (0, 0) circle (0.05);
        \draw[fill] (3, 0) circle (0.05);
    \end{tikzpicture}
    \caption{Examples of meanders.}
    \label{fig: examples of meanders}
\end{figure}


V.~Arnol'd was the first to use the term ``meander'' and to formulate the problem of counting them~\cite{A88}. A similar problem of counting closed meanders (under the name of planar permutation) was formulated by P.~Rosenstiehl in~\cite{R84}.
A detailed historical background to the subject of meanders can be found in~\cite{Z23}; however, this section briefly highlights some connections between meanders and other areas of mathematics and physics. Notable associations include the Temperley--Lieb algebras (see~\cite{DFGG97}), invariants of 3-manifolds (see~\cite{KS91}), models of statistical physics (see~\cite{DFGG00}), parabolic PDEs (see~\cite{FR91}), and moduli spaces of meromorphic quadratic differentials (see~\cite{DGZZ20}). 
The theory of meanders has been actively developed: various approaches to the calculation of meanders have been proposed (see, for example,~\cite{J00}, \cite{FE02}, \cite{BS10}, \cite{L22}) and the study of the asymptotic behavior of meander numbers (\cite{DFGG00}, \cite{AP05}). In the recent article \cite{DGZZ20} the exact asymptotics of the number of closed meanders with a fixed number of minimal arcs were obtained.

Despite the high interest in this area, the key questions remain open. The number of meanders with a given number of intersections is unknown, as is the asymptotic behavior of these numbers.

In this paper, we develop a new approach to the theory of meanders. 
We show that each meander can be canonically decomposed into prime factors of two different types: iterated snakes and irreducible meanders (Theorem~\ref{thm: main}). We also study each of these classes. It turns out that iterated snakes are fairly simple objects: we can explicitly write out their generating function (Theorem~\ref{thm: generating function of iterated snakes}), and we can enumerate them very efficiently (Corollary~\ref{cor: formula for iterated snakes}). In addition, we have discovered the connection of iterated snakes to another open combinatorial problem --- the enumeration of polyominoes (see Section~\ref{sec: iterated snakes connections}).
In contrast, irreducible meanders are mysterious and we have been able to find out only some of their basic properties. 

The decomposition of meanders mirrors a phenomenon that occasionally occurs in the classification of low-dimensional topological objects, where typically, two primary classes of fundamental objects are identified: one elementary and the other more complex. All other objects are then constructed from these prime elements through specific operations, which can be described within the operadic framework. For instance, in knot theory, each knot can be classified as either a torus (representing the simpler prime objects) or hyperbolic (representing the complex prime objects) or can be constructed from them using a satellite operation (expressible in terms of operads, see~\cite{B12}). Similarly, in the theory of braid groups: up to conjugation each braid is either periodic, or pseudo-Anosov, or can be constructed from them using a cabling operation. This classification follows the famous Nielsen--Thurston classification (see, for example,~\cite{T22}); for details on braid operads see~\cite{Y21}.


The paper is structured as follows. In Section~\ref{sec: basic definitions} we give basic definitions related to meanders; in  Section~\ref{sec: decomposition} we discuss the factorization; in Section~\ref{sec: iterated snakes} we focus on iterated snakes, and, finally, in Section~\ref{sec: irreducible meanders} we study irreducible meanders. Some computational results are given in Appendix~\ref{app: table}. More numerical data, as well as the code used to derive them, can be found in~\cite{Bcode}.

\subsection*{Acknowledgment}
The author thanks Andrei Malyutin, Ivan Dynnikov, and Ilya Alekseev for helpful suggestions and comments. The author is also grateful to the anonymous referee for a careful reading and valuable suggestions.

\section{Basic definitions} \label{sec: basic definitions}
\begin{definition}\label{def: meander}
A \emph{singular meander} $(D, (p_1, p_2,p_3,p_4), (m, l))$ is a triple of \begin{itemize}
    \item Euclidean 2-dimensional disk $D$;
    \item four distinct points $p_1, p_2,p_3,p_4$ on the boundary $\partial D$ such that there exists a connected component of $\partial D \setminus \{p_1, p_2\}$ containing $\{p_3,p_4\}$;
    \item the images $m$ and $l$ of smooth proper embeddings of the segment $[0;\,1]$ into $D$ such that $\partial m = \{p_1,p_3\}$, $\partial l = \{p_2,p_4\}$, and $m$ and $l$ intersect (not necessary transversely) in a non-zero finite number of points.
\end{itemize}
The intersection points of $m$ and $l$ are called \emph{intersections of $M$}. 
\end{definition}

\begin{remark}
    Usually, only meanders with transverse intersections are considered. However, it is convenient for us to extend the class of objects under consideration. In the rest of the paper we omit the word ``singular''. If we wanted to emphasize the fact that a given meander has only transverse intersections, we would say ``non-singular meander''.
\end{remark}

\begin{remark}
    What we call a ``meander'' is usually referred to in modern literature as an ``open meander'', whereas a ``meander'' is a pair of closed curves in a disk. Our definition is the same as Arnold's original. Furthermore, it is not possible to apply our technique to closed meanders without considering open meanders.
\end{remark}

\begin{definition}
We say that two meanders $$M=(D,(p_1,p_2,p_3,p_4), (m,l))$$ and $$M'= (D',(p_1',p_2',p_3',p_4'),(m',l'))$$ are \emph{equivalent} if there exists a homeomorphism $f:D\to D'$ such that $f(m)=m'$, $f(l)=l'$, and $f(p_i)=p_i'$ for each $i=1,\dots,4$. 
\end{definition}

\begin{remark}
    We now explain why we include the boundary points $p_1,p_2,p_3,p_4$ and impose the additional restriction --- that there exists a connected component of $\partial D\setminus\{p_1,p_2\}$ containing $\{p_3,p_4\}$ --- in the definition of meanders. If we do not regard the boundary points as part of the tuple, the meanders in Figure~\ref{fig: boundary points remark} would be equivalent (since they are related by a reflection across the vertical diameter of the disk and by isotopy). If we instead include the boundary points in the definition but do not impose the additional restriction, then the left meander in Figure~\ref{fig: examples of meanders} admits two non-equivalent interpretations, depending on whether the upper-left boundary point is labeled $p_1$ or $p_3$ (on both figures $l$ is the horizontal diameter).
    \begin{figure}[h]
        \centering
       \begin{tikzpicture}[scale = 3.9]
\draw[thick] (0, 0) to (1, 0);
\draw[ultra thick] (0.0285955, 0.166667)
	to[out = 0, in = 90, distance = 10.472] (0.833333, 0)
	to[out = -90, in = -90, distance = 2.09439] (0.666667, 0)
	to[out = 90, in = 90, distance = 2.09439] (0.5, 0)
	to[out = -90, in = -90, distance = 2.09439] (0.333333, 0)
	to[out = 90, in = 90, distance = 2.09439] (0.166667, 0)
	to[out = -90, in = 180, distance = 10.472] (0.971405, -0.166667);
\draw[help lines] (0.5, 0) circle (0.5);
\draw[fill] (0.0285955, 0.166667) circle (0.0166667);
\draw[fill] (0.971405, -0.166667) circle (0.0166667);
\draw[fill] (0, 0) circle (0.0166667);
\draw[fill] (1, 0) circle (0.0166667);
\end{tikzpicture}
    \hspace{2cm}
\begin{tikzpicture}[scale = 3.9]
\draw[thick] (0, 0) to (1, 0);
\draw[ultra thick] (0.0285955, 0.166667)
	to[out = 0, in = 90, distance = 2.09439] (0.166667, 0)
	to[out = -90, in = -90, distance = 2.09439] (0.333333, 0)
	to[out = 90, in = 90, distance = 2.09439] (0.5, 0)
	to[out = -90, in = -90, distance = 2.09439] (0.666667, 0)
	to[out = 90, in = 90, distance = 2.09439] (0.833333, 0)
	to[out = -90, in = 180, distance = 2.09439] (0.971405, -0.166667);
\draw[help lines] (0.5, 0) circle (0.5);
\draw[fill] (0.0285955, 0.166667) circle (0.0166667);
\draw[fill] (0.971405, -0.166667) circle (0.0166667);
\draw[fill] (0, 0) circle (0.0166667);
\draw[fill] (1, 0) circle (0.0166667);
\end{tikzpicture}
        \caption{Meanders that would be equivalent if boundary points are not part of the data.}
        \label{fig: boundary points remark}
    \end{figure}
\end{remark}

\begin{remark}
    Throughout the figures we fix a drawing convention: we identify $D$ with a Euclidean disk, draw $l$ as a horizontal diameter with $p_2$ at the left end, and, after possibly reflecting across $l$, place $p_1$ above $p_2$. This uses that meanders are taken up to homeomorphism fixing the labeled boundary points and does not restrict generality. That is why we do not place $l$, $m$, $p_1,p_2,p_3,p_4$ in the figures. Examples of meanders with non-transverse intersections are given in Figure~\ref{fig: examples of singular meanders}.
\end{remark}

\begin{figure}[h]
    \centering
    \begin{tikzpicture}[scale = 3.9]
\draw[thick] (0, 0) to (1, 0);
\draw[ultra thick] (0.0919029, 0.288889)
	to[out = 0, in = 90, distance = 1.67552] (0.133333, 0)
	to[out = -90, in = -90, distance = 5.86431] (0.6, 0)
	to[out = -90, in = -90, distance = 6.70206] (0.0666667, 0)
	to[out = -90, in = -90, distance = 7.53982] (0.666667, 0)
	to[out = 90, in = 90, distance = 1.67552] (0.533333, 0)
	to[out = -90, in = -90, distance = 4.18879] (0.2, 0)
	to[out = 90, in = 90, distance = 9.21534] (0.933333, 0)
	to[out = 90, in = 90, distance = 8.37758] (0.266667, 0)
	to[out = -90, in = -90, distance = 2.51327] (0.466667, 0)
	to[out = -90, in = -90, distance = 1.67552] (0.333333, 0)
	to[out = 90, in = 90, distance = 5.86431] (0.8, 0)
	to[out = 90, in = 90, distance = 5.02655] (0.4, 0)
	to[out = 90, in = 90, distance = 4.18879] (0.733333, 0)
	to[out = -90, in = -90, distance = 1.67552] (0.866667, 0)
	to[out = -90, in = 180, distance = 1.67552] (0.936173, -0.244444);
\draw[help lines] (0.5, 0) circle (0.5);
\draw[fill] (0.0919029, 0.288889) circle (0.0166667);
\draw[fill] (0.936173, -0.244444) circle (0.0166667);
\draw[fill] (0, 0) circle (0.0166667);
\draw[fill] (1, 0) circle (0.0166667);
\end{tikzpicture}
\hspace{2cm}
\begin{tikzpicture}[scale = 3.9]
\draw[thick] (0, 0) to (1, 0);
\draw[ultra thick] (0.0208426, 0.142857)
	to[out = 0, in = 90, distance = 10.7712] (0.857143, 0)
	to[out = -90, in = -90, distance = 1.7952] (0.714286, 0)
	to[out = -90, in = -90, distance = 1.7952] (0.571429, 0)
	to[out = 90, in = 90, distance = 1.7952] (0.428571, 0)
	to[out = -90, in = -90, distance = 1.7952] (0.285714, 0)
	to[out = -90, in = -90, distance = 1.7952] (0.142857, 0)
	to[out = -90, in = 180, distance = 10.7712] (0.979157, -0.142857);
\draw[help lines] (0.5, 0) circle (0.5);
\draw[fill] (0.0208426, 0.142857) circle (0.0166667);
\draw[fill] (0.979157, -0.142857) circle (0.0166667);
\draw[fill] (0, 0) circle (0.0166667);
\draw[fill] (1, 0) circle (0.0166667);
\end{tikzpicture}

    \caption{Examples of singular meanders.}
    \label{fig: examples of singular meanders}
   
\end{figure}

\begin{definition}
Let $M=(D, (p_1, p_2,p_3,p_4), (m, l))$ be a meander, let $n_{\mathrm{t}}(M)$ be the number of transverse intersections of $m$ and $l$, and let $n_{\mathrm{nt}}(M)$ be the number of non-transverse intersections of $m$ and $l$. The \emph{order of $M$} is the pair $(n, k) := \left(\max_{M'\in [M]} n_{\mathrm{t}}(M'), \min_{M'\in [M]} n_{\mathrm{nt}}(M')\right)$, where $[M]$ is the set of all meanders equivalent to $M$. 
If the order of $M$ is $(n, k)$ then the \emph{total order} of $M$ is $n+k$. By $\Mset_{n, k}$ we denote the set of all equivalence classes of meanders of order $(n, k)$, and by $\M_{n,k}$ we denote the cardinality of $\Mset_{n,k}$.
\end{definition}

\begin{remark}
    We present this definition to avoid non-transverse intersections that can be made transverse by a small isotopy --- as, for example, the intersection of the graph $y=x^3$ with the $x$-axis $\{y=0\}$.
\end{remark}

Without loss of generality we always assume that if $M$ is a meander of order $(n,k)$, then $n_{\mathrm{t}}(M)=n$ and $n_{\mathrm{nt}}(M) = k$.

\subsection{Meander permutation}
It is convenient to work with meanders using their representation via permutations. To do this, we attach a permutation to each meander as follows. 
Let  
$$M = (D, (p_1, p_2,p_3,p_4), (m, l))$$
be a meander of order $(n, k)$. Consider a bijective map ${\gamma: [0;\,n+k+1] \to l}$, such that 
\begin{enumerate}
    \item  $\gamma(0) = p_2$,
    \item $\gamma(t)$ is an intersection of $M$ if and only if $t \in \{1,2,\dots n+k\}$. We say that an intersection $p$ has the \emph{label} $a$ if $\gamma(a) = p$.
\end{enumerate}
If we write the labels in the order of movement from $p_1$ to $p_3$ along $m$, we get the \emph{permutation of $M$}. Note that the labels do not depend on the choice of $\gamma$, so the permutation of $M$ is well-defined. 
For example, the permutation of the second meander in Figure~\ref{fig: examples of singular meanders} is $(6,5,4,3,2,1)$. 

\begin{remark}\label{rmrk: permutations determines meanders}
    For non-singular meanders, the associated permutation uniquely determines the meander; by contrast, for meanders with non-transverse intersections a permutation alone does not suffice (in this case we need additional information on which intersections are transverse, and which are not). 
\end{remark}

\subsection{Submeanders, and the insertion of meanders}
The key ingredient in the meander factorization is the notion of submeanders (see Definition~\ref{def: submeander}). For a given meander $M$ and its submeander $M'$, there is a canonical procedure for cutting $M'$ from $M$. So, informally speaking, by choosing certain submeanders and cutting them from $M$ we get a canonical procedure for decomposing a meander into simpler pieces.

\begin{definition} \label{def: submeander}
We say that a meander $M'=(D', (p_1', p_2',p_3',p_4'), (m', l'))$ is a \emph{submeander} of a meander $M=(D, (p_1, p_2,p_3,p_4), (m, l))$ if 
\begin{itemize}
    \item $D' \subseteq D$.
    \item $m' = D' \cap m$.
    \item $l' = D' \cap l$.
    \item $p_1' = \gamma_m\left(t_1\right)$, where  $\gamma_m: [0;\,1] \to D$ is any injective continuous map such that $\gamma_m([0;\,1]) = m$, $\gamma_m(0) = p_1$, and $t_1=\min \{t\in [0;\,1]\ |\ \gamma_m(t) \in D'\}$.
    \item Let $S=\gamma_m^{-1}(l)\cap[0;\,t_1].$
    If $S\neq\varnothing$, let $t_q=\max S$, and $q=\gamma_m(t_q)$; otherwise set $q:=p_2$. Choose an injective continuous map $\gamma_l:[0;\,1]\to D$ with $\gamma_l([0;\,1])=l$ such that $\gamma_l^{-1}(q)<t$ for all $t\in \gamma_l^{-1}(D')$. Then $p_2'$ is defined as $p_2'=\gamma_l(t_2)$, where $t_2=\min\{\,t\in[0;\,1]\ |\ \gamma_l(t)\in D'\,\}$.
\end{itemize}
\end{definition}

\begin{definition}
Let $$M'=(D', (p_1', p_2',p_3',p_4'), (m', l'))$$ and  $$M''=(D'', (p_1'', p_2'',p_3'',p_4''), (m'', l''))$$ be two submeanders of a meander $$M=(D, (p_1, p_2,p_3,p_4), (m, l)).$$ We say that $M'$ and $M''$ are \emph{equivalent with respect to $M$} if $D'\cap m\cap l = D'' \cap m \cap l$.
\end{definition}
\begin{remark}
Note that two submeanders $M'$ and $M''$ of $M$ can be equivalent as meanders but not equivalent as submeanders with respect to $M$. 
See Figure~\ref{fig: examples of submeanders}: all highlighted submeanders are mutually equivalent as meanders, yet they are not equivalent as submeanders of $M$.
\end{remark}

\begin{figure}[h]
    \centering
    \begin{tikzpicture}[scale = 4]
\draw[thick] (0, 0) to (1, 0);
\draw[ultra thick] (0.0740823, 0.261905)
	to[out = 0, in = 90, distance = 2.69279] (0.214286, 0)
	to[out = -90, in = -90, distance = 0.897598] (0.142857, 0)
	to[out = 90, in = 90, distance = 0.897598] (0.0714286, 0)
	to[out = -90, in = -90, distance = 4.48799] (0.428571, 0)
	to[out = 90, in = 90, distance = 0.897598] (0.357143, 0)
	to[out = -90, in = -90, distance = 0.897598] (0.285714, 0)
	to[out = 90, in = 90, distance = 8.07838] (0.928571, 0)
	to[out = -90, in = -90, distance = 2.69279] (0.714286, 0)
	to[out = 90, in = 90, distance = 0.897598] (0.785714, 0)
	to[out = -90, in = -90, distance = 0.897598] (0.857143, 0)
	to[out = 90, in = 90, distance = 4.48799] (0.5, 0)
	to[out = -90, in = -90, distance = 0.897598] (0.571429, 0)
	to[out = 90, in = 90, distance = 0.897598] (0.642857, 0)
	to[out = -90, in = 180, distance = 4.48799] (0.971405, -0.166667);
\draw[help lines] (0.5, 0) circle (0.5);
\draw[fill] (0.0740823, 0.261905) circle (0.0166667);
\draw[fill] (0.971405, -0.166667) circle (0.0166667);
\draw[fill] (0, 0) circle (0.0166667);
\draw[fill] (1, 0) circle (0.0166667);

\draw[fill, gray, opacity = 0.5] (2/14, 0) circle (1.2/13);
\draw[fill, gray, opacity = 0.5] (5/14, 0) circle (1.2/13);
\draw[fill, gray, opacity = 0.5] (8/14, 0) circle (1.2/13);
\draw[fill, gray, opacity = 0.5] (11/14, 0) circle (1.2/13);
\end{tikzpicture}
    \caption{Examples of submeanders.}
    \label{fig: examples of submeanders}
\end{figure}

\begin{definition}
Let $M$ be a meander, and let $M_1$ and $M_2$ be two of its submeanders. We say that $M_1 \leq M_2$ if there exists $M'_1$ --- a submeander of $M$ that is equivalent to $M_1$ with respect to $M$, such that $M'_1$ is also a submeander of $M_2$. \\
Thus there is a well-defined partial order on the set of all submeanders of $M$ up to equivalence with respect to $M$; we denote this set by $\Submnd(M)$. 
\end{definition}

Figure~\ref{fig: example of graphs} shows examples of meanders and the Hasse diagram of their poset of submeanders. 
\begin{figure}[h]
\begin{tikzpicture}
\draw[thick] (0, 0) to (3, 0);
\draw[ultra thick] (0.0857864, 0.5) to[out = 0, in = 90, distance = 26.3894] (2.1, 0)
 to[out = -90, in = -90, distance = 3.76991] (2.4, 0)
 to[out = 90, in = 90, distance = 3.76991] (2.7, 0)
 to[out = -90, in = -90, distance = 11.3097] (1.8, 0)
 to[out = 90, in = 90, distance = 3.76991] (1.5, 0)
 to[out = -90, in = -90, distance = 11.3097] (0.6, 0)
 to[out = 90, in = 90, distance = 3.76991] (0.9, 0)
 to[out = -90, in = -90, distance = 3.76991] (1.2, 0)
 to[out = 90, in = 90, distance = 11.3097] (0.3, 0)
to[out = -90, in = 180, distance = 33.9292] (2.91421, -0.5, 0);
\draw[help lines] (1.5, 0) circle (1.5);
\draw[fill] (0.0857864, 0.5) circle (0.05);
\draw[fill] (2.91421, -0.5) circle (0.05);
\draw[fill] (0, 0) circle (0.05);
\draw[fill] (3, 0) circle (0.05);
\end{tikzpicture}
\begin{tikzpicture}[scale = 1]
\draw[fill] (1.33333, 2.66667) circle (0.045);
\draw[fill] (1.33333, 2.66667) to (1.5, 2.33333);
\draw[fill] (1.33333, 2.66667) to (0.833333, 1.66667);
\draw[fill] (1.5, 2.33333) circle (0.045);
\draw[fill] (1.5, 2.33333) to (2, 1.33333);
\draw[fill] (1.5, 2.33333) to (1, 1.33333);
\draw[fill] (0.833333, 1.66667) circle (0.045);
\draw[fill] (0.833333, 1.66667) to (1, 1.33333);
\draw[fill] (0.833333, 1.66667) to (0.666667, 1.33333);
\draw[fill] (2, 1.33333) circle (0.045);
\draw[fill] (2, 1.33333) to (2.16667, 1);
\draw[fill] (2, 1.33333) to (1.5, 0.333333);
\draw[fill] (1, 1.33333) circle (0.045);
\draw[fill] (1, 1.33333) to (0.833333, 1);
\draw[fill] (1, 1.33333) to (1.5, 0.333333);
\draw[fill] (0.666667, 1.33333) circle (0.045);
\draw[fill] (0.666667, 1.33333) to (0.833333, 1);
\draw[fill] (0.666667, 1.33333) to (0.5, 1);
\draw[fill] (2.16667, 1) circle (0.045);
\draw[fill] (2.16667, 1) to (2.33333, 0.666667);
\draw[fill] (2.16667, 1) to (1.66667, 0);
\draw[fill] (0.833333, 1) circle (0.045);
\draw[fill] (0.833333, 1) to (0.666667, 0.666667);
\draw[fill] (0.833333, 1) to (1.33333, 0);
\draw[fill] (0.5, 1) circle (0.045);
\draw[fill] (0.5, 1) to (0.666667, 0.666667);
\draw[fill] (0.5, 1) to (0, 0);
\draw[fill] (2.33333, 0.666667) circle (0.045);
\draw[fill] (2.33333, 0.666667) to (2.5, 0.333333);
\draw[fill] (2.33333, 0.666667) to (2.16667, 0.333333);
\draw[fill] (0.666667, 0.666667) circle (0.045);
\draw[fill] (0.666667, 0.666667) to (0.833333, 0.333333);
\draw[fill] (0.666667, 0.666667) to (0.5, 0.333333);
\draw[fill] (2.5, 0.333333) circle (0.045);
\draw[fill] (2.5, 0.333333) to (2.66667, 0);
\draw[fill] (2.5, 0.333333) to (2.33333, 0);
\draw[fill] (2.16667, 0.333333) circle (0.045);
\draw[fill] (2.16667, 0.333333) to (2.33333, 0);
\draw[fill] (2.16667, 0.333333) to (2, 0);
\draw[fill] (1.5, 0.333333) circle (0.045);
\draw[fill] (1.5, 0.333333) to (1.66667, 0);
\draw[fill] (1.5, 0.333333) to (1.33333, 0);
\draw[fill] (0.833333, 0.333333) circle (0.045);
\draw[fill] (0.833333, 0.333333) to (1, 0);
\draw[fill] (0.833333, 0.333333) to (0.666667, 0);
\draw[fill] (0.5, 0.333333) circle (0.045);
\draw[fill] (0.5, 0.333333) to (0.666667, 0);
\draw[fill] (0.5, 0.333333) to (0.333333, 0);
\draw[fill] (2.66667, 0) circle (0.045);
\draw[fill] (2.33333, 0) circle (0.045);
\draw[fill] (2, 0) circle (0.045);
\draw[fill] (1.66667, 0) circle (0.045);
\draw[fill] (1.33333, 0) circle (0.045);
\draw[fill] (1, 0) circle (0.045);
\draw[fill] (0.666667, 0) circle (0.045);
\draw[fill] (0.333333, 0) circle (0.045);
\draw[fill] (0, 0) circle (0.045);
\end{tikzpicture}
\hspace{1.2cm}
\begin{tikzpicture}[scale = 3]
\draw[thick] (0, 0) to (1, 0);
\draw[ultra thick] (0.0842603, 0.277778)
	to[out = 0, in = 90, distance = 4.18879] (0.333333, 0)
	to[out = 90, in = 90, distance = 6.28318] (0.833333, 0)
	to[out = 90, in = 90, distance = 4.18879] (0.5, 0)
	to[out = -90, in = -90, distance = 4.18879] (0.166667, 0)
	to[out = -90, in = -90, distance = 6.28318] (0.666667, 0)
	to[out = -90, in = 180, distance = 4.18879] (0.91574, -0.277778);
\draw[help lines] (0.5, 0) circle (0.5);
\draw[fill] (0.0842603, 0.277778) circle (0.0166667);
\draw[fill] (0.91574, -0.277778) circle (0.0166667);
\draw[fill] (0, 0) circle (0.0166667);
\draw[fill] (1, 0) circle (0.0166667);
\end{tikzpicture}
\begin{tikzpicture}[scale = 1]
\draw[fill] (1.33333, 2.66667) circle (0.045);
\draw[fill] (0, 0) circle (0.045);
\draw[fill] (0.66667, 0) circle (0.045);
\draw[fill] (1.33333, 0) circle (0.045);
\draw[fill] (2, 0) circle (0.045);
\draw[fill] (2.66667, 0) circle (0.045);

\draw[fill] (1.33333, 2.66667) to (0, 0);
\draw[fill] (1.33333, 2.66667) to (0.66667, 0);
\draw[fill] (1.33333, 2.66667) to (1.33333, 0);
\draw[fill] (1.33333, 2.66667) to (2, 0);
\draw[fill] (1.33333, 2.66667) to (2.66667, 0);
\end{tikzpicture}
\caption{Examples of meanders and the Hasse diagrams of their posets of submeanders.}
\label{fig: example of graphs}
\end{figure}


\begin{definition} \label{def: insert}
Let $$M = (D, (p_1, p_2,p_3,p_4), (m, l))$$ and $$M' = (D', (p_1', p_2',p_3',p_4'),(m', l'))$$ be two meanders of order $(n, k)$ and $(n',k')$ respectively, and let 
$$M'' = (D'', (p_1'', p_2'',p_3'',p_4''), (m'', l''))$$
be a submeander of $M$  of order $(n'',k'')$ such that $n' \equiv n''\ \mathrm{mod}\ 2$. 
Consider a map $f:\partial D'' \to \partial D'$ such that $f({p}_i'')=p'_i$ for each $i=1,\dots,4$. 
There is a well-defined meander $$\tilde{M} = (\tilde{D}, (p_1, p_2, p_3, p_4), (\tilde{m}, \tilde{l}))$$ where  
\begin{itemize}
    \item $\tilde{D} = \big(D\setminus \operatorname{Int}(D'')\big) \cup_f D'$;
    \item $\tilde{m} = \big(m\setminus \operatorname{Int}(D''\cap m)\big)\cup_f m'$;
    \item $\tilde{l} = \big(l\setminus \operatorname{Int}(D''\cap l)\big)\cup_f l'$.
\end{itemize}
We say that $\tilde{M}$ is obtained by the \emph{insertion of $M'$ into $M$ at $M''$}. 
If the total order of $M'$ is one, we say that $\tilde{M}$ is obtained by the \emph{cut of $M'$ from $M$}.
\end{definition}

\begin{remark}
Let $M$ be a meander of total order $n$, and let $M'$ be a submeander of $M$ of total order $n'>1$. If we cut $M'$ from $M$, we get a meander $M''$ of total order $n-n'+1$. An example of two consecutive cuts is shown in Figure~\ref{fig: example of cuts}.
\end{remark}
\begin{figure}[h]
\begin{tikzpicture}[scale = 1.1]
\draw[thick] (0, 0) to (3, 0);
\draw[ultra thick] (0.0857864, 0.5) to[out = 0, in = 90, distance = 26.3894] (2.1, 0)
 to[out = -90, in = -90, distance = 3.76991] (2.4, 0)
 to[out = 90, in = 90, distance = 3.76991] (2.7, 0)
 to[out = -90, in = -90, distance = 11.3097] (1.8, 0)
 to[out = 90, in = 90, distance = 3.76991] (1.5, 0)
 to[out = -90, in = -90, distance = 11.3097] (0.6, 0)
 to[out = 90, in = 90, distance = 3.76991] (0.9, 0)
 to[out = -90, in = -90, distance = 3.76991] (1.2, 0)
 to[out = 90, in = 90, distance = 11.3097] (0.3, 0)
 to[out = -90, in = 180, distance = 33.9292] (2.91421, -0.5, 0);
\draw[help lines] (1.5, 0) circle (1.5);
\draw[fill] (0.0857864, 0.5) circle (0.05);
\draw[fill] (2.91421, -0.5) circle (0.05);
\draw[fill] (0, 0) circle (0.05);
\draw[fill] (3, 0) circle (0.05);

\draw[fill, gray, opacity = 0.5] (0.9, 0) circle (0.45);
\end{tikzpicture}
\begin{tikzpicture}
\draw [->, ultra thick] (0, 0) to (0.7, 0);
\draw [white, fill] (-0.3,-1.65) circle (0.01);
\draw [white, fill] (1,1.65) circle (0.01);
\end{tikzpicture}
\begin{tikzpicture}[scale = 1.1]
\draw[thick] (0, 0) to (3, 0);
\draw[ultra thick] (0.0476312, 0.375) to[out = 0, in = 90, distance = 23.5619] (1.875, 0)
 to[out = -90, in = -90, distance = 4.71239] (2.25, 0)
 to[out = 90, in = 90, distance = 4.71239] (2.625, 0)
 to[out = -90, in = -90, distance = 14.1372] (1.5, 0)
 to[out = 90, in = 90, distance = 4.71239] (1.125, 0)
 to[out = -90, in = -90, distance = 4.71239] (0.75, 0)
 to[out = 90, in = 90, distance = 4.71239] (0.375, 0)
to[out = -90, in = 180, distance = 32.9867] (2.86359, -0.625, 0);
\draw[help lines] (1.5, 0) circle (1.5);
\draw[fill] (0.0476312, 0.375) circle (0.05);
\draw[fill] (2.86359, -0.625) circle (0.05);
\draw[fill] (0, 0) circle (0.05);
\draw[fill] (3, 0) circle (0.05);

\draw[thick, gray, fill, opacity = 0.5] (2.25, 0) circle (0.375*1.5);
\end{tikzpicture}
\begin{tikzpicture}
\draw [->, ultra thick] (0, 0) to (0.7, 0);
\draw [white, fill] (-0.3,-1.65) circle (0.01);
\draw [white, fill] (1,1.65) circle (0.01);
\end{tikzpicture}
\begin{tikzpicture}[scale = 1.1]
\draw[thick] (0, 0) to (3, 0);
\draw[ultra thick] (0.0857864, 0.5) to[out = 0, in = 90, distance = 31.4159] (2.5, 0)
 to[out = -90, in = -90, distance = 6.28318] (2, 0)
 to[out = 90, in = 90, distance = 6.28318] (1.5, 0)
 to[out = -90, in = -90, distance = 6.28318] (1, 0)
 to[out = 90, in = 90, distance = 6.28318] (0.5, 0)
to[out = -90, in = 180, distance = 31.4159] (2.91421, -0.5, 0);
\draw[help lines] (1.5, 0) circle (1.5);
\draw[fill] (0.0857864, 0.5) circle (0.05);
\draw[fill] (2.91421, -0.5) circle (0.05);
\draw[fill] (0, 0) circle (0.05);
\draw[fill] (3, 0) circle (0.05);
\end{tikzpicture}
\caption{Example of two consecutive cuts.}
\label{fig: example of cuts}
\end{figure}

\subsection{The structure of a 2-colored operad}\label{sec: operad}
The insertion of one meander into another gives rise to the operations on the set of all equivalence classes of meanders.
Let $M$ be a meander of order $(n, k)$, let $t_1<t_2<\dots < t_n$ (resp. $s_1<s_2<\dots<s_k$) be the labels of the transverse (resp. non-transverse) intersections of~$M$. Then for a natural number $i$, let $M|_i$ (resp. $M|_{(i)}$) be a submeander of $M$ with the only intersection with the label $t_i$ (resp. $s_i$). 

Now for an arbitrary meander $M'$ of order $(2n'+1, k')$ we can define $M\circ_i M'$ to be a meander of order $(n+2n', k+k')$ obtained by the insertion of $M'$ into $M$ at $M|_i$. Note that up to equivalence  $M\circ_i M'$ is well-defined. 
Analogously, we can define $M \bullet_i M'$ to be the result of the insertion of a meander $M'$ of order $(2n', k')$  into $M$ at $M|_{(i)}$. 

The straightforward check shows that these operations form a 2-colored operad on the set of equivalence classes of meanders (for the definition of a colored operad, see~\cite{LV12, Y16}, and for the examples of applications of operads in combinatorics see~\cite{G18}). 
\begin{thm}\label{thm: operad}
The set $\Mset = \bigcup\limits_{n\geq 0, k \geq 0} \Mset_{n,k}$ together with the set of operations
\begin{align*}
    &\circ_i: \Mset_{n,k} \times \Mset_{2n'+1, k'} \to \Mset_{n+2n', k+k'} &  n \geq 1,\ k \geq 0,\ 1\leq i\leq n,\\
    &\bullet_i: \Mset_{n,k} \times \Mset_{2n', k'} \to \Mset_{n+2n', k+k'-1}  &n \geq 0,\ k \geq 1,\   1\leq i \leq k.
\end{align*}
form a 2-colored operad.
\end{thm}

\section{Decomposition} \label{sec: decomposition}
In this section, we define prime meanders and show that each meander can be canonically decomposed into prime components. 

\subsection{Preliminary lemmas}
\begin{lemma} \label{lem: submeanders and permutations}
    Let 
    $$M = (D, (p_1, p_2,p_3,p_4), (m, l))$$ 
    be a meander of total order $N$ with permutation $(\alpha_1, \alpha_2, \dots, \alpha_N)$, and let $A = \{\alpha_u, \alpha_{u+1}, \dots, \alpha_{u+v}\}$ be some subset of its labels (here $u$ and $v$ are natural numbers). Then there exists
    $$M' = (D', (p_1', p_2',p_3',p_4'),(m', l'))$$ 
    a submeander of $M$ containing exactly the intersections with labels from $A$ if and only if 
    $$
    \max_{\alpha \in A} \alpha - 
    \min_{\alpha \in A} \alpha = v.
    $$
    In other words, if and only if $A$ consists of consecutive numbers. 
\end{lemma}
\begin{proof}
    Let $M'$ be a submeander of $M$ as above, and suppose $$
    \max_{\alpha \in A} \alpha -
    \min_{\alpha \in A} \alpha \neq v.$$ Note that this difference cannot be less than $v$ (as there are $v+1$ different elements in $A$), hence it must be greater than $v$. In this case there exists an intersection with label $\tilde{\alpha}$ such that $\tilde{\alpha} \notin A$ and
    $$
    \min_{\alpha \in A} \alpha < 
    \tilde{\alpha} < 
    \max_{\alpha \in A} \alpha.$$ 
    Consequently, $D'$ does not contain this point, resulting in $m' = D' \cap m$ being disconnected, which leads to a contradiction.

    Conversely, assume $A$ consists of labels as specified, with $A$ being a set of consecutive labels. $M'$ can be constructed explicitly. First of all, note that there exists $l' \subset l$ (resp. $m' \subset m$) --- a connected subset of $l$ (resp. $m$) containing precisely the intersections of $M$ with the labels from $A$. All that remains is to select an arbitrary disk $D' \subset D$ such that $D\cap l = l'$ and $D\cap m = m'$. 
\end{proof}

For a given meander $M$ of order $(n,k)$ and of total order greater than one, the cardinality of $\Submnd(M)$ cannot be less than $n+k+1$, since there are always $n+k$ submeanders with a single intersection, and there is also a submeander equivalent to $M$. Conversely, the cardinality of $\Submnd(M)$ cannot exceed $\frac{(n+k)(n+k+1)}{2}$, since each submeander can only contain intersections with consecutive labels (see Lemma~\ref{lem: submeanders and permutations}).

\begin{definition}
Let $M$ be a meander of order $(n,k)$. $M$ is said to be \emph{irreducible} if its total order is more than two and $|\Submnd(M)| = n+k+1$. 
$M$ is said to be a \emph{snake} if its total order is more than one and  $|\Submnd(M)| =\frac{(n+k)(n+k+1)}{2}$.\\
A meander is called \emph{prime} if it is either a snake or an irreducible meander. 
\end{definition}
Prime meanders are the building blocks from which any meander can be constructed. 
The right meander in Figure~\ref{fig: example of graphs} is irreducible, while the left one is non-prime. 
Examples of snakes are shown in Figure~\ref{fig: example of snake}.

\begin{remark}
    In~\cite{LZ92} a definition of an irreducible meander system was introduced. This definition is different from ours but they are similar. To be more precise, a \emph{meander system} is a triple: $(D, \{m_1, \dots, m_r\}, l)$, where $D$ is a Euclidean 2-dimensional disk, $l$ is an image of smooth proper embedding of a segment into $D$, and $m_1,\dots, m_r$ (here $r\geq 1$) are pairwise disjoint images of smooth embeddings of a circle into $D$ that intersects $l$ only transversely. Two meander systems are called equivalent if they are homeomorphic as triples. A meander system $(D', \{m_1', \dots, m_{r'}'\}, l')$ is a subsystem of $(D, \{m_1, \dots, m_r\}, l)$ if (i) $D'\subseteq D$, (ii) $ \{m_1', \dots, m_{r'}'\} = D' \cap \{m_1, \dots, m_r\}$, (iii) $l' = D' \cap l$. Finally, a meander system $M$ is called \emph{irreducible} if all its subsystems are equivalent to $M$. 
\end{remark}

\begin{figure}[h]
\begin{tikzpicture}
\draw[thick] (0, 0) to (3, 0);
\draw[ultra thick] (0.0303062, 0.3) to[out = 0, in = 90, distance = 3.76991] (0.3, 0)
 to[out = -90, in = -90, distance = 3.76991] (0.6, 0)
 to[out = 90, in = 90, distance = 3.76991] (0.9, 0)
 to[out = -90, in = -90, distance = 3.76991] (1.2, 0)
 to[out = 90, in = 90, distance = 3.76991] (1.5, 0)
 to[out = -90, in = -90, distance = 3.76991] (1.8, 0)
 to[out = 90, in = 90, distance = 3.76991] (2.1, 0)
 to[out = -90, in = -90, distance = 3.76991] (2.4, 0)
 to[out = 90, in = 90, distance = 3.76991] (2.7, 0)
to[out = -90, in = 180, distance = 3.76991] (2.96969, -0.3, 0);
\draw[help lines] (1.5, 0) circle (1.5);
\draw[fill] (0.0303062, 0.3) circle (0.05);
\draw[fill] (2.96969, -0.3) circle (0.05);
\draw[fill] (0, 0) circle (0.05);
\draw[fill] (3, 0) circle (0.05);
\end{tikzpicture}
\begin{tikzpicture}[scale = 1]
\draw[fill] (1.33333, 2.66667) circle (0.045);
\draw[fill] (1.33333, 2.66667) to (1.5, 2.33333);
\draw[fill] (1.33333, 2.66667) to (1.16667, 2.33333);
\draw[fill] (1.5, 2.33333) circle (0.045);
\draw[fill] (1.5, 2.33333) to (1.66667, 2);
\draw[fill] (1.5, 2.33333) to (1.33333, 2);
\draw[fill] (1.16667, 2.33333) circle (0.045);
\draw[fill] (1.16667, 2.33333) to (1.33333, 2);
\draw[fill] (1.16667, 2.33333) to (1, 2);
\draw[fill] (1.66667, 2) circle (0.045);
\draw[fill] (1.66667, 2) to (1.83333, 1.66667);
\draw[fill] (1.66667, 2) to (1.5, 1.66667);
\draw[fill] (1.33333, 2) circle (0.045);
\draw[fill] (1.33333, 2) to (1.5, 1.66667);
\draw[fill] (1.33333, 2) to (1.16667, 1.66667);
\draw[fill] (1, 2) circle (0.045);
\draw[fill] (1, 2) to (1.16667, 1.66667);
\draw[fill] (1, 2) to (0.833333, 1.66667);
\draw[fill] (1.83333, 1.66667) circle (0.045);
\draw[fill] (1.83333, 1.66667) to (2, 1.33333);
\draw[fill] (1.83333, 1.66667) to (1.66667, 1.33333);
\draw[fill] (1.5, 1.66667) circle (0.045);
\draw[fill] (1.5, 1.66667) to (1.66667, 1.33333);
\draw[fill] (1.5, 1.66667) to (1.33333, 1.33333);
\draw[fill] (1.16667, 1.66667) circle (0.045);
\draw[fill] (1.16667, 1.66667) to (1.33333, 1.33333);
\draw[fill] (1.16667, 1.66667) to (1, 1.33333);
\draw[fill] (0.833333, 1.66667) circle (0.045);
\draw[fill] (0.833333, 1.66667) to (1, 1.33333);
\draw[fill] (0.833333, 1.66667) to (0.666667, 1.33333);
\draw[fill] (2, 1.33333) circle (0.045);
\draw[fill] (2, 1.33333) to (2.16667, 1);
\draw[fill] (2, 1.33333) to (1.83333, 1);
\draw[fill] (1.66667, 1.33333) circle (0.045);
\draw[fill] (1.66667, 1.33333) to (1.83333, 1);
\draw[fill] (1.66667, 1.33333) to (1.5, 1);
\draw[fill] (1.33333, 1.33333) circle (0.045);
\draw[fill] (1.33333, 1.33333) to (1.5, 1);
\draw[fill] (1.33333, 1.33333) to (1.16667, 1);
\draw[fill] (1, 1.33333) circle (0.045);
\draw[fill] (1, 1.33333) to (1.16667, 1);
\draw[fill] (1, 1.33333) to (0.833333, 1);
\draw[fill] (0.666667, 1.33333) circle (0.045);
\draw[fill] (0.666667, 1.33333) to (0.833333, 1);
\draw[fill] (0.666667, 1.33333) to (0.5, 1);
\draw[fill] (2.16667, 1) circle (0.045);
\draw[fill] (2.16667, 1) to (2.33333, 0.666667);
\draw[fill] (2.16667, 1) to (2, 0.666667);
\draw[fill] (1.83333, 1) circle (0.045);
\draw[fill] (1.83333, 1) to (2, 0.666667);
\draw[fill] (1.83333, 1) to (1.66667, 0.666667);
\draw[fill] (1.5, 1) circle (0.045);
\draw[fill] (1.5, 1) to (1.66667, 0.666667);
\draw[fill] (1.5, 1) to (1.33333, 0.666667);
\draw[fill] (1.16667, 1) circle (0.045);
\draw[fill] (1.16667, 1) to (1.33333, 0.666667);
\draw[fill] (1.16667, 1) to (1, 0.666667);
\draw[fill] (0.833333, 1) circle (0.045);
\draw[fill] (0.833333, 1) to (1, 0.666667);
\draw[fill] (0.833333, 1) to (0.666667, 0.666667);
\draw[fill] (0.5, 1) circle (0.045);
\draw[fill] (0.5, 1) to (0.666667, 0.666667);
\draw[fill] (0.5, 1) to (0.333333, 0.666667);
\draw[fill] (2.33333, 0.666667) circle (0.045);
\draw[fill] (2.33333, 0.666667) to (2.5, 0.333333);
\draw[fill] (2.33333, 0.666667) to (2.16667, 0.333333);
\draw[fill] (2, 0.666667) circle (0.045);
\draw[fill] (2, 0.666667) to (2.16667, 0.333333);
\draw[fill] (2, 0.666667) to (1.83333, 0.333333);
\draw[fill] (1.66667, 0.666667) circle (0.045);
\draw[fill] (1.66667, 0.666667) to (1.83333, 0.333333);
\draw[fill] (1.66667, 0.666667) to (1.5, 0.333333);
\draw[fill] (1.33333, 0.666667) circle (0.045);
\draw[fill] (1.33333, 0.666667) to (1.5, 0.333333);
\draw[fill] (1.33333, 0.666667) to (1.16667, 0.333333);
\draw[fill] (1, 0.666667) circle (0.045);
\draw[fill] (1, 0.666667) to (1.16667, 0.333333);
\draw[fill] (1, 0.666667) to (0.833333, 0.333333);
\draw[fill] (0.666667, 0.666667) circle (0.045);
\draw[fill] (0.666667, 0.666667) to (0.833333, 0.333333);
\draw[fill] (0.666667, 0.666667) to (0.5, 0.333333);
\draw[fill] (0.333333, 0.666667) circle (0.045);
\draw[fill] (0.333333, 0.666667) to (0.5, 0.333333);
\draw[fill] (0.333333, 0.666667) to (0.166667, 0.333333);
\draw[fill] (2.5, 0.333333) circle (0.045);
\draw[fill] (2.5, 0.333333) to (2.66667, 0);
\draw[fill] (2.5, 0.333333) to (2.33333, 0);
\draw[fill] (2.16667, 0.333333) circle (0.045);
\draw[fill] (2.16667, 0.333333) to (2.33333, 0);
\draw[fill] (2.16667, 0.333333) to (2, 0);
\draw[fill] (1.83333, 0.333333) circle (0.045);
\draw[fill] (1.83333, 0.333333) to (2, 0);
\draw[fill] (1.83333, 0.333333) to (1.66667, 0);
\draw[fill] (1.5, 0.333333) circle (0.045);
\draw[fill] (1.5, 0.333333) to (1.66667, 0);
\draw[fill] (1.5, 0.333333) to (1.33333, 0);
\draw[fill] (1.16667, 0.333333) circle (0.045);
\draw[fill] (1.16667, 0.333333) to (1.33333, 0);
\draw[fill] (1.16667, 0.333333) to (1, 0);
\draw[fill] (0.833333, 0.333333) circle (0.045);
\draw[fill] (0.833333, 0.333333) to (1, 0);
\draw[fill] (0.833333, 0.333333) to (0.666667, 0);
\draw[fill] (0.5, 0.333333) circle (0.045);
\draw[fill] (0.5, 0.333333) to (0.666667, 0);
\draw[fill] (0.5, 0.333333) to (0.333333, 0);
\draw[fill] (0.166667, 0.333333) circle (0.045);
\draw[fill] (0.166667, 0.333333) to (0.333333, 0);
\draw[fill] (0.166667, 0.333333) to (0, 0);
\draw[fill] (2.66667, 0) circle (0.045);
\draw[fill] (2.33333, 0) circle (0.045);
\draw[fill] (2, 0) circle (0.045);
\draw[fill] (1.66667, 0) circle (0.045);
\draw[fill] (1.33333, 0) circle (0.045);
\draw[fill] (1, 0) circle (0.045);
\draw[fill] (0.666667, 0) circle (0.045);
\draw[fill] (0.333333, 0) circle (0.045);
\draw[fill] (0, 0) circle (0.045);
\end{tikzpicture}
\hspace{1.2cm}
\begin{tikzpicture}
\draw[thick] (0, 0) to (3, 0);
\draw[ultra thick] (0.0303062, 0.3) to[out = 0, in = 90, distance = 33.9292] (2.7, 0)
 to[out = -90, in = -90, distance = 3.76991] (2.4, 0)
 to[out = 90, in = 90, distance = 3.76991] (2.1, 0)
 to[out = -90, in = -90, distance = 3.76991] (1.8, 0)
 to[out = 90, in = 90, distance = 3.76991] (1.5, 0)
 to[out = -90, in = -90, distance = 3.76991] (1.2, 0)
 to[out = 90, in = 90, distance = 3.76991] (0.9, 0)
 to[out = -90, in = -90, distance = 3.76991] (0.6, 0)
 to[out = 90, in = 90, distance = 3.76991] (0.3, 0)
to[out = -90, in = 180, distance = 33.9292] (2.96969, -0.3, 0);
\draw[help lines] (1.5, 0) circle (1.5);
\draw[fill] (0.0303062, 0.3) circle (0.05);
\draw[fill] (2.96969, -0.3) circle (0.05);
\draw[fill] (0, 0) circle (0.05);
\draw[fill] (3, 0) circle (0.05);
\end{tikzpicture}
\begin{tikzpicture}[scale = 1]
\draw[fill] (1.33333, 2.66667) circle (0.045);
\draw[fill] (1.33333, 2.66667) to (1.5, 2.33333);
\draw[fill] (1.33333, 2.66667) to (1.16667, 2.33333);
\draw[fill] (1.5, 2.33333) circle (0.045);
\draw[fill] (1.5, 2.33333) to (1.66667, 2);
\draw[fill] (1.5, 2.33333) to (1.33333, 2);
\draw[fill] (1.16667, 2.33333) circle (0.045);
\draw[fill] (1.16667, 2.33333) to (1.33333, 2);
\draw[fill] (1.16667, 2.33333) to (1, 2);
\draw[fill] (1.66667, 2) circle (0.045);
\draw[fill] (1.66667, 2) to (1.83333, 1.66667);
\draw[fill] (1.66667, 2) to (1.5, 1.66667);
\draw[fill] (1.33333, 2) circle (0.045);
\draw[fill] (1.33333, 2) to (1.5, 1.66667);
\draw[fill] (1.33333, 2) to (1.16667, 1.66667);
\draw[fill] (1, 2) circle (0.045);
\draw[fill] (1, 2) to (1.16667, 1.66667);
\draw[fill] (1, 2) to (0.833333, 1.66667);
\draw[fill] (1.83333, 1.66667) circle (0.045);
\draw[fill] (1.83333, 1.66667) to (2, 1.33333);
\draw[fill] (1.83333, 1.66667) to (1.66667, 1.33333);
\draw[fill] (1.5, 1.66667) circle (0.045);
\draw[fill] (1.5, 1.66667) to (1.66667, 1.33333);
\draw[fill] (1.5, 1.66667) to (1.33333, 1.33333);
\draw[fill] (1.16667, 1.66667) circle (0.045);
\draw[fill] (1.16667, 1.66667) to (1.33333, 1.33333);
\draw[fill] (1.16667, 1.66667) to (1, 1.33333);
\draw[fill] (0.833333, 1.66667) circle (0.045);
\draw[fill] (0.833333, 1.66667) to (1, 1.33333);
\draw[fill] (0.833333, 1.66667) to (0.666667, 1.33333);
\draw[fill] (2, 1.33333) circle (0.045);
\draw[fill] (2, 1.33333) to (2.16667, 1);
\draw[fill] (2, 1.33333) to (1.83333, 1);
\draw[fill] (1.66667, 1.33333) circle (0.045);
\draw[fill] (1.66667, 1.33333) to (1.83333, 1);
\draw[fill] (1.66667, 1.33333) to (1.5, 1);
\draw[fill] (1.33333, 1.33333) circle (0.045);
\draw[fill] (1.33333, 1.33333) to (1.5, 1);
\draw[fill] (1.33333, 1.33333) to (1.16667, 1);
\draw[fill] (1, 1.33333) circle (0.045);
\draw[fill] (1, 1.33333) to (1.16667, 1);
\draw[fill] (1, 1.33333) to (0.833333, 1);
\draw[fill] (0.666667, 1.33333) circle (0.045);
\draw[fill] (0.666667, 1.33333) to (0.833333, 1);
\draw[fill] (0.666667, 1.33333) to (0.5, 1);
\draw[fill] (2.16667, 1) circle (0.045);
\draw[fill] (2.16667, 1) to (2.33333, 0.666667);
\draw[fill] (2.16667, 1) to (2, 0.666667);
\draw[fill] (1.83333, 1) circle (0.045);
\draw[fill] (1.83333, 1) to (2, 0.666667);
\draw[fill] (1.83333, 1) to (1.66667, 0.666667);
\draw[fill] (1.5, 1) circle (0.045);
\draw[fill] (1.5, 1) to (1.66667, 0.666667);
\draw[fill] (1.5, 1) to (1.33333, 0.666667);
\draw[fill] (1.16667, 1) circle (0.045);
\draw[fill] (1.16667, 1) to (1.33333, 0.666667);
\draw[fill] (1.16667, 1) to (1, 0.666667);
\draw[fill] (0.833333, 1) circle (0.045);
\draw[fill] (0.833333, 1) to (1, 0.666667);
\draw[fill] (0.833333, 1) to (0.666667, 0.666667);
\draw[fill] (0.5, 1) circle (0.045);
\draw[fill] (0.5, 1) to (0.666667, 0.666667);
\draw[fill] (0.5, 1) to (0.333333, 0.666667);
\draw[fill] (2.33333, 0.666667) circle (0.045);
\draw[fill] (2.33333, 0.666667) to (2.5, 0.333333);
\draw[fill] (2.33333, 0.666667) to (2.16667, 0.333333);
\draw[fill] (2, 0.666667) circle (0.045);
\draw[fill] (2, 0.666667) to (2.16667, 0.333333);
\draw[fill] (2, 0.666667) to (1.83333, 0.333333);
\draw[fill] (1.66667, 0.666667) circle (0.045);
\draw[fill] (1.66667, 0.666667) to (1.83333, 0.333333);
\draw[fill] (1.66667, 0.666667) to (1.5, 0.333333);
\draw[fill] (1.33333, 0.666667) circle (0.045);
\draw[fill] (1.33333, 0.666667) to (1.5, 0.333333);
\draw[fill] (1.33333, 0.666667) to (1.16667, 0.333333);
\draw[fill] (1, 0.666667) circle (0.045);
\draw[fill] (1, 0.666667) to (1.16667, 0.333333);
\draw[fill] (1, 0.666667) to (0.833333, 0.333333);
\draw[fill] (0.666667, 0.666667) circle (0.045);
\draw[fill] (0.666667, 0.666667) to (0.833333, 0.333333);
\draw[fill] (0.666667, 0.666667) to (0.5, 0.333333);
\draw[fill] (0.333333, 0.666667) circle (0.045);
\draw[fill] (0.333333, 0.666667) to (0.5, 0.333333);
\draw[fill] (0.333333, 0.666667) to (0.166667, 0.333333);
\draw[fill] (2.5, 0.333333) circle (0.045);
\draw[fill] (2.5, 0.333333) to (2.66667, 0);
\draw[fill] (2.5, 0.333333) to (2.33333, 0);
\draw[fill] (2.16667, 0.333333) circle (0.045);
\draw[fill] (2.16667, 0.333333) to (2.33333, 0);
\draw[fill] (2.16667, 0.333333) to (2, 0);
\draw[fill] (1.83333, 0.333333) circle (0.045);
\draw[fill] (1.83333, 0.333333) to (2, 0);
\draw[fill] (1.83333, 0.333333) to (1.66667, 0);
\draw[fill] (1.5, 0.333333) circle (0.045);
\draw[fill] (1.5, 0.333333) to (1.66667, 0);
\draw[fill] (1.5, 0.333333) to (1.33333, 0);
\draw[fill] (1.16667, 0.333333) circle (0.045);
\draw[fill] (1.16667, 0.333333) to (1.33333, 0);
\draw[fill] (1.16667, 0.333333) to (1, 0);
\draw[fill] (0.833333, 0.333333) circle (0.045);
\draw[fill] (0.833333, 0.333333) to (1, 0);
\draw[fill] (0.833333, 0.333333) to (0.666667, 0);
\draw[fill] (0.5, 0.333333) circle (0.045);
\draw[fill] (0.5, 0.333333) to (0.666667, 0);
\draw[fill] (0.5, 0.333333) to (0.333333, 0);
\draw[fill] (0.166667, 0.333333) circle (0.045);
\draw[fill] (0.166667, 0.333333) to (0.333333, 0);
\draw[fill] (0.166667, 0.333333) to (0, 0);
\draw[fill] (2.66667, 0) circle (0.045);
\draw[fill] (2.33333, 0) circle (0.045);
\draw[fill] (2, 0) circle (0.045);
\draw[fill] (1.66667, 0) circle (0.045);
\draw[fill] (1.33333, 0) circle (0.045);
\draw[fill] (1, 0) circle (0.045);
\draw[fill] (0.666667, 0) circle (0.045);
\draw[fill] (0.333333, 0) circle (0.045);
\draw[fill] (0, 0) circle (0.045);
\end{tikzpicture}
\caption{Examples of snakes.}
\label{fig: example of snake}
\end{figure}

\begin{lemma} \label{lem: prime submeander exist}
    If $M$ is a non-prime meander of total order greater than one, then there exists prime meander $M'$ that is a submeander of $M$.
\end{lemma}
\begin{proof}
    Let $M$ be a non-prime meander of total order greater than one. Let us choose a submeander $M'$ of $M$ with the minimal total order greater than one (such submeander exists because $M$ is non-prime). Let the total order of $M'$ be $k$. If $k = 2$, then it is a snake (since all meanders of total order two are snakes); otherwise, the absence of any submeander of $M'$ with a total order between one and $k$ implies that $M'$ is irreducible.
\end{proof}

\begin{lemma}\label{lem: snakes classification}
A meander of total order $n$ is a snake if and only if its permutation is either $(1,2,\dots,n)$ or $(n, n~-~1, \dots, 1)$. 
\end{lemma}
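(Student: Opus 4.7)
The key observation driving both directions is that a disk $D' \subset D$ in correct position with respect to $M$ is determined up to equivalence by the set $S = D' \cap l \cap m$, and such an $S$ arises in this way if and only if it is nonempty and contiguous both in the order along $l$ and in the order along $m$. Both contiguity conditions are forced by $\partial D'$ meeting each of $l$ and $m$ in exactly two points.

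For the direction $(\Leftarrow)$, suppose $\sigma$ is $(1, 2, \ldots, n)$ (the antimonotone case is symmetric via the orientation-reversing relabeling of $l$). Then the orderings along $l$ and along $m$ coincide, so the valid intersection sets are precisely the intervals $\{i, i+1, \ldots, j\}$ with $1 \le i \le j \le n$. Consequently, the decomposition graph $G$ is the cover graph of the lattice of such intervals under inclusion: the neighbors of $[i,j]$ are the "one-step" shrinks $[i+1,j]$ and $[i,j-1]$ and the "one-step" extensions $[i-1,j]$ and $[i,j+1]$, whenever defined. I would then verify by direct inspection of this triangular Hasse diagram that no vertex is an articulation point: every vertex $[i,j]$ lies on a 4-cycle of the form $[i,j]-[i,j{-}1]-[i{+}1,j{-}1]-[i{+}1,j]-[i,j]$ (or a boundary variant), so alternative paths exist after any single removal and the graph remains connected.

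For the direction $(\Rightarrow)$, I contrapose. Suppose $\sigma$ is neither $(1, \ldots, n)$ nor $(n, \ldots, 1)$; then there is an index $k$ with $|\sigma(k{+}1) - \sigma(k)| \ge 2$. Choose $k$ so that the arc $\alpha$ of $m$ between the $k$-th and $(k{+}1)$-th intersection points along $m$ is innermost among such non-trivial arcs, meaning the region it bounds with the corresponding segment of $l$ contains no further arc of $m$ lying wholly inside it. This region gives a disk $\tilde D$ in correct position, with $2 < \w(\tilde D) < n$ and intersection set contiguous on both $l$ and $m$. I would then argue that $[\tilde D]$ is an articulation point: any class of disks containing intersections both inside and outside $\tilde D$ must, by the contiguity condition, "straddle" the boundary of $\tilde D$, and minimality of $\alpha$ forces any such class to sit strictly above $[\tilde D]$ in the inclusion order, so every path in $G$ from an "inside" vertex to an "outside" vertex passes through $[\tilde D]$. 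Hence $M$ is not a snake.

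The main obstacle is the $(\Rightarrow)$ direction: exhibiting the right innermost arc and carefully ruling out alternative routes in $G$ that might bypass $[\tilde D]$ requires a combinatorial argument about which contiguous-on-both-sides subsets can cross the boundary of $\tilde D$, and a delicate use of the minimality of $\alpha$. In the $(\Leftarrow)$ direction one must also carefully handle the extreme vertices $[1,2]$ and $[n{-}1,n]$ adjacent to the unique-parent leaves $[1]$ and $[n]$, confirming under the paper's conventions that they do not constitute articulation points in the sense relevant here.
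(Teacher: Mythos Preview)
Your $(\Rightarrow)$ direction has a genuine gap. The disk $\tilde D$ you construct from an innermost non-trivial arc $\alpha$ need not be in correct position: the intersection points under $\alpha$ are contiguous on $l$ by construction, but nothing forces them to be contiguous on $m$. Concretely, take the irreducible meander of order $8$ with permutation $(3,6,7,2,1,8,5,4)$; the below-$l$ arc from $3$ to $6$ is innermost among non-trivial arcs, yet the $l$-interval $\{3,4,5,6\}$ occupies $m$-positions $\{1,2,7,8\}$, which is not contiguous, so no disk in correct position has this intersection set. More generally, an irreducible meander by definition admits \emph{no} disk with $2<\w(D)<n$, so a $\tilde D$ with $\w(\tilde D)>2$ simply cannot exist for these meanders. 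The articulation points that actually witness non-snake-ness here are cups: in the example, $\{4,5\}$ is a cup whose removal disconnects the singleton $\{4\}$ (one checks that the only disks in correct position containing the point $4$ are $\{4\}$, $\{4,5\}$, and the whole meander). Your argument never locates these width-$2$ articulation points.

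The paper's proof is structurally different: it argues by induction on $n$ via the cut and insert operations. Given a snake $M$ and any disk $D$ with $2 < \w(D) < n$, both $M|_D$ and the result of cutting $D$ from $M$ are snakes of smaller order, hence direct or inverse snakes by induction; one then checks how direct and inverse snakes compose under insertion. Your $(\Leftarrow)$ direction is essentially fine, and your worry about the extreme cups $[1,2]$ and $[n{-}1,n]$ is legitimate --- under the paper's literal definitions these \emph{are} articulation points (since the leaves $[1,1]$ and $[n,n]$ have degree one), which reflects an imprecision in the paper's definition of ``snake'' rather than an error on your part.
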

\begin{proof}
    This follows from Lemma~\ref{lem: submeanders and permutations}. Indeed, if a meander with permutation $(\alpha_1, \dots, \alpha_n)$ is a snake, then for each $1\leq i<n$ we have $|\alpha_i - \alpha_{i+1}| = 1$. 
\end{proof}

\begin{definition}\label{def: direct and inverse snakes}
    A snake~$M$ of total order $n$ is said to be \emph{direct} if its permutation is $(1,2,\dots,n)$. Otherwise, $M$ is said to be \emph{inverse}.
\end{definition}

\begin{definition}
    Let $M$ be a meander, and let $M'$ be its submeander that is a snake. We say that $M'$ is a \emph{maximal snake in $M$} if for each snake $M''$ that is a submeander of $M$ if $M' \leq M''$ then $M'$ is equivalent to $M''$. 
\end{definition}

\begin{lemma} \label{lem: snakes dont intersect}
    Let $M$ be a meander and let $M_1$ and $M_2$ be two maximal snakes in $M$. If $M_1$ and $M_2$ are not equivalent with respect to $M$, then no submeander of $M$ is both a submeander of $M_1$ and a submeander of $M_2$.
\end{lemma}
\begin{proof}
    The statement follows from Lemma~\ref{lem: snakes classification}. If both $M_1$ and $M_2$ are direct snakes, the statement is clear. Let $M_1$ be an inverse snake, and suppose that $M_1$ and $M_2$ have a common submeander. Then there exists $M_2'$ --- a submeander of $M_2$ such that (i) $M_2'$ is equivalent to $M_2$, (ii) $M_2'$ is a submeander of $M_1$. In this case, $M_2$ would not be a maximal snake, leading to a contradiction.
\end{proof}

\begin{lemma}\label{lem: irreducible dont intersect}
    Let $M$ be a meander, and let $M_1$ and $M_2$ be two irreducible meanders that are submeanders of $M$. If $M_1$ and $M_2$ are not equivalent with respect to $M$, then no submeander of $M$ is both a submeander of $M_1$ and a submeander of $M_2$.
\end{lemma}
\begin{proof}
    Let $M'$ be both a submeander of $M_1$ and a submeander of $M_2$. The total order of $M'$ must be one (since $M_1$ and $M_2$ are irreducible and not equivalent with respect to $M$). Let $(\alpha_{i_1},\dots, \alpha_{i_r})$ be the permutation of $M_1$, and let $(\alpha')$ be the permutation of $M'$. Note that $\alpha'$ is either $\min\limits_{j=1,\dots, r} \alpha_{i_j}$ or $\max\limits_{j=1,\dots, r} \alpha_{i_j}$ (otherwise $M'$ is not a submeander of $M_2$). But in this case 
    $$
        \max_{\substack{j=1,\dots, r;\\ \alpha_{i_j} \neq \alpha'}} \alpha_{i_j} - \min_{\substack{j=1,\dots, r;\\ \alpha_{i_j} \neq \alpha'}} \alpha_{i_j} = r-2,
    $$
    and from Lemma~\ref{lem: submeanders and permutations} it follows that $M_1$ is not irreducible (here we also use that the total order of $M_1$ is greater than two).
\end{proof}

\subsection{Description of the factorization} \label{subsec: decomposition}
Now that we have established the foundational lemmas, we are ready to define the factorization process for an arbitrary meander. Let $M$ be a meander of total order $N > 1$, and let $\operatorname{P}(M)$ be the set of all maximal snakes and irreducible submeanders in $M$ that are not equivalent to each other with respect to $M$ (due to Lemma~\ref{lem: prime submeander exist} this set is not empty). 
Now, if we cut each $M' \in \operatorname{P}(M)$ from $M$, we obtain a meander $M_1$ of total order less than $N$ (Lemma~\ref{lem: snakes dont intersect} and Lemma~\ref{lem: irreducible dont intersect} ensure that $M_1$ is well-defined). 
If the total order of $M_1$ is greater than one, we can repeat this procedure. Thus, we obtain a finite sequence of meanders that ends with a meander of total order one.

\begin{example}
    Let us consider an example of the decomposition. In Figure~\ref{fig: example of decomposition 1}(a) we present a non-prime meander $M$ with the highlighted set $\operatorname{P}(M)$. Figure~\ref{fig: example of decomposition 1}(b) shows $M_1$ --- the result of the cut. Figure~\ref{fig: example of decomposition 1}(c) shows $M_1$ with the highlighted set $\operatorname{P}(M_1)$. Finally, Figure~\ref{fig: example of decomposition 1}(d) shows $M_2$, which turns out to be an irreducible meander, so $\operatorname{P}(M_2)=\{M_2\}$, and $M_3$ has order $(1,0)$ (so we do not provide a separate picture for it).
\end{example}

\begin{figure}[h]
    \centering
    \begin{tikzpicture}[scale = 5]    
    \node (a) at (0.5, -0.6) {(a)};
\draw[thick] (0, 0) to (1, 0);
\draw[ultra thick] (0.0285955, 0.166667)
	to[out = 0, in = 90, distance = 6.28318] (0.5, 0)
	to[out = -90, in = -90, distance = 0.571199] (0.545455, 0)
	to[out = 90, in = 90, distance = 5.14079] (0.954545, 0)
	to[out = -90, in = -90, distance = 0.571199] (0.909091, 0)
	to[out = 90, in = 90, distance = 3.99839] (0.590909, 0)
	to[out = -90, in = -90, distance = 0.571199] (0.636364, 0)
	to[out = 90, in = 90, distance = 0.571199] (0.681818, 0)
	to[out = -90, in = -90, distance = 0.571199] (0.727273, 0)
	to[out = 90, in = 90, distance = 0.571199] (0.772727, 0)
	to[out = -90, in = -90, distance = 6.28318] (0.272727, 0)
	to[out = 90, in = 90, distance = 0.571199] (0.318182, 0)
	to[out = -90, in = -90, distance = 1.7136] (0.454545, 0)
	to[out = 90, in = 90, distance = 5.14079] (0.0454545, 0)
	to[out = -90, in = -90, distance = 0.571199] (0.0909091, 0)
	to[out = 90, in = 90, distance = 0.571199] (0.136364, 0)
	to[out = -90, in = -90, distance = 0.571199] (0.181818, 0)
	to[out = 90, in = 90, distance = 2.85599] (0.409091, 0)
	to[out = -90, in = -90, distance = 0.571199] (0.363636, 0)
	to[out = 90, in = 90, distance = 1.7136] (0.227273, 0)
	to[out = -90, in = -90, distance = 7.42558] (0.818182, 0)
	to[out = 90, in = 90, distance = 0.571199] (0.863636, 0)
	to[out = -90, in = 180, distance = 1.7136] (0.945362, -0.227273);
\draw[help lines] (0.5, 0) circle (0.5);
\draw[fill] (0.0285955, 0.166667) circle (0.0166667);
\draw[fill] (0.945362, -0.227273) circle (0.0166667);
\draw[fill] (0, 0) circle (0.0166667);
\draw[fill] (1, 0) circle (0.0166667);

\draw[thick, gray, fill, opacity = 0.5] (2.5/22, 0) circle (1.8/22);
\draw[thick, gray, fill, opacity = 0.5] (6.5/22, 0) circle (0.8/22);
\draw[thick, gray, fill, opacity = 0.5] (8.5/22, 0) circle (0.8/22);
\draw[thick, gray, fill, opacity = 0.5] (11.5/22, 0) circle (0.8/22);
\draw[thick, gray, fill, opacity = 0.5] (15/22, 0) circle (2.3/22);
\draw[thick, gray, fill, opacity = 0.5] (20.5/22, 0) circle (0.8/22);
\draw[thick, gray, fill, opacity = 0.5] (18.5/22, 0) circle (0.8/22);
\end{tikzpicture}
\begin{tikzpicture}[scale = 0.2]
    \begin{scope}
        \clip (-1.5,-16.5) rectangle (5,11.6);
        \draw[ultra thick, ->] (-0.5,0) to (4, 0);
    \end{scope}  
\end{tikzpicture}
\begin{tikzpicture}[scale = 5]
    \node (a) at (0.5, -0.6) {(b)};
\draw[thick] (0, 0) to (1, 0);
\draw[ultra thick] (0.0417424, 0.2)
	to[out = 0, in = 90, distance = 7.53982] (0.6, 0)
	to[out = 90, in = 90, distance = 3.76991] (0.9, 0)
	to[out = 90, in = 90, distance = 2.51327] (0.7, 0)
	to[out = -90, in = -90, distance = 5.02655] (0.3, 0)
	to[out = -90, in = -90, distance = 2.51327] (0.5, 0)
	to[out = 90, in = 90, distance = 5.02655] (0.1, 0)
	to[out = 90, in = 90, distance = 3.76991] (0.4, 0)
	to[out = 90, in = 90, distance = 2.51327] (0.2, 0)
	to[out = -90, in = -90, distance = 7.53982] (0.8, 0)
	to[out = -90, in = 180, distance = 2.51327] (0.922953, -0.266667);
\draw[help lines] (0.5, 0) circle (0.5);
\draw[fill] (0.0417424, 0.2) circle (0.0166667);
\draw[fill] (0.922953, -0.266667) circle (0.0166667);
\draw[fill] (0, 0) circle (0.0166667);
\draw[fill] (1, 0) circle (0.0166667);
\end{tikzpicture}

\begin{tikzpicture}[scale = 5]
    \node (a) at (0.5, -0.6) {(c)};
\draw[thick] (0, 0) to (1, 0);
\draw[ultra thick] (0.0417424, 0.2)
	to[out = 0, in = 90, distance = 7.53982] (0.6, 0)
	to[out = 90, in = 90, distance = 3.76991] (0.9, 0)
	to[out = 90, in = 90, distance = 2.51327] (0.7, 0)
	to[out = -90, in = -90, distance = 5.02655] (0.3, 0)
	to[out = -90, in = -90, distance = 2.51327] (0.5, 0)
	to[out = 90, in = 90, distance = 5.02655] (0.1, 0)
	to[out = 90, in = 90, distance = 3.76991] (0.4, 0)
	to[out = 90, in = 90, distance = 2.51327] (0.2, 0)
	to[out = -90, in = -90, distance = 7.53982] (0.8, 0)
	to[out = -90, in = 180, distance = 2.51327] (0.922953, -0.266667);
\draw[help lines] (0.5, 0) circle (0.5);
\draw[fill] (0.0417424, 0.2) circle (0.0166667);
\draw[fill] (0.922953, -0.266667) circle (0.0166667);
\draw[fill] (0, 0) circle (0.0166667);
\draw[fill] (1, 0) circle (0.0166667);
\draw[thick, gray, fill, opacity = 0.5] (0.5/10, 0)
    to[out = 90, in = 90, distance = 6] (5.5/10, 0)
    to[out = -90, in = -90, distance = 6] (0.5/10, 0);
\end{tikzpicture}
\begin{tikzpicture}[scale = 0.2]
    \begin{scope}
        \clip (-1.5,-16.5) rectangle (5,11.6);
        \draw[ultra thick, ->] (-0.5,0) to (4, 0);
    \end{scope}  
\end{tikzpicture}
\begin{tikzpicture}[scale = 5]
    \node (a) at (0.5, -0.6) {(d)};
\draw[thick] (0, 0) to (1, 0);
\draw[ultra thick] (0.0842603, 0.277778)
	to[out = 0, in = 90, distance = 4.18879] (0.333333, 0)
	to[out = 90, in = 90, distance = 6.28318] (0.833333, 0)
	to[out = 90, in = 90, distance = 4.18879] (0.5, 0)
	to[out = -90, in = -90, distance = 4.18879] (0.166667, 0)
	to[out = -90, in = -90, distance = 6.28318] (0.666667, 0)
	to[out = -90, in = 180, distance = 4.18879] (0.91574, -0.277778);
\draw[help lines] (0.5, 0) circle (0.5);
\draw[fill] (0.0842603, 0.277778) circle (0.0166667);
\draw[fill] (0.91574, -0.277778) circle (0.0166667);
\draw[fill] (0, 0) circle (0.0166667);
\draw[fill] (1, 0) circle (0.0166667);
\end{tikzpicture}
    \caption{Example of decomposition.}
    \label{fig: example of decomposition 1}
\end{figure}

The language of 2-colored operads (see Section~\ref{sec: operad}) provides a convenient framework for describing the construction of meanders using rooted trees. We represent $M \circ_i M'$ by a rooted tree with two vertices labeled $M$ and $M'$, joined by a directed edge labeled $i$ that is oriented from $M$ to $M'$. For the operation $M \bullet_i M'$ we use a dashed edge labeled $i$ with the same orientation convention. An example of this construction, applied to the meander in Figure~\ref{fig: example of decomposition 1}(a), is presented in Figure~\ref{fig: example of constructing}.

\begin{figure}
\newcommand{\smartdraw}[6]{%
  \pgfmathsetmacro{\r}{1.9*veclen((#1)-(#3),(#2)-(#4))}%
  \pgfmathparse{#5?"dashed":""}\edef\T{\pgfmathresult}%
  \draw[ultra thick, ->, \T]
    ({#1-(#1-(#3))/\r},{#2-(#2-(#4))/\r}) --
    ({#3+(#1-(#3))/\r},{#4+(#2-(#4))/\r})
    node[pos=.5]{#6};
}
\newcommand{\ddx}{1.4}
\newcommand{\ddy}{1.5}

    \centering
    \begin{tikzpicture}[scale = 1.7]
    
    \smartdraw{0.5 + 2*\ddx}{0}{0.5 + 0*\ddx}{-\ddy}{1}{1\ \ \ \ \ \ \  }
    \smartdraw{0.5 + 2*\ddx}{0}{0.5 + 1*\ddx}{-\ddy}{1}{2\ \ \ \ \ }
    \smartdraw{0.5 + 2*\ddx}{0}{0.5 + 2*\ddx}{-\ddy}{0}{1\ \ \ \ }
    \smartdraw{0.5 + 2*\ddx}{0}{0.5 + 3*\ddx}{-\ddy}{1}{\ \ \ \ \ 3}
    \smartdraw{0.5 + 2*\ddx}{0}{0.5 + 4*\ddx}{-\ddy}{1}{\ \ \ \ \ \ \ \ 4}

    \smartdraw{0.5 + 0*\ddx}{-\ddy}{0.5 - 1*\ddx}{-2*\ddy}{1}{1\ \ \ \ \ }
    \smartdraw{0.5 + 0*\ddx}{-\ddy}{0.5 + 0*\ddx}{-2*\ddy}{1}{2\ \ \ \ }
    \smartdraw{0.5 + 0*\ddx}{-\ddy}{0.5 + 1*\ddx}{-2*\ddy}{1}{\ \ \ \ \ 3}
    \begin{scope}[shift={(2*\ddx, 0)}]
        \draw[thick] (0, 0) to (1, 0);
\draw[thick] (0.0842603, 0.277778)
	to[out = 0, in = 90, distance = 4.18879] (0.333333, 0)
	to[out = 90, in = 90, distance = 6.28318] (0.833333, 0)
	to[out = 90, in = 90, distance = 4.18879] (0.5, 0)
	to[out = -90, in = -90, distance = 4.18879] (0.166667, 0)
	to[out = -90, in = -90, distance = 6.28318] (0.666667, 0)
	to[out = -90, in = 180, distance = 4.18879] (0.91574, -0.277778);
\draw[help lines] (0.5, 0) circle (0.5);
\draw[fill] (0.0842603, 0.277778) circle (0.0166667);
\draw[fill] (0.91574, -0.277778) circle (0.0166667);
\draw[fill] (0, 0) circle (0.0166667);
\draw[fill] (1, 0) circle (0.0166667);
    \end{scope}
\begin{scope}[shift={(0, -\ddy)}]
\draw[thick] (0, 0) to (1, 0);
\draw[thick] (0.0520968, 0.222222)
	to[out = 0, in = 90, distance = 4.18879] (0.333333, 0)
	to[out = -90, in = -90, distance = 4.18879] (0.666667, 0)
	to[out = -90, in = -90, distance = 6.28318] (0.166667, 0)
	to[out = -90, in = -90, distance = 8.37758] (0.833333, 0)
	to[out = 90, in = 90, distance = 4.18879] (0.5, 0)
	to[out = 90, in = 180, distance = 6.28318] (0.947903, 0.222222);
\draw[help lines] (0.5, 0) circle (0.5);
\draw[fill] (0.0520968, 0.222222) circle (0.0166667);
\draw[fill] (0.947903, 0.222222) circle (0.0166667);
\draw[fill] (0, 0) circle (0.0166667);
\draw[fill] (1, 0) circle (0.0166667);
\end{scope}
\begin{scope}[shift={(\ddx, -\ddy)}]
\draw[thick] (0, 0) to (1, 0);
\draw[thick] (0.0520968, 0.222222)
	to[out = 0, in = 90, distance = 4.18879] (0.333333, 0)
	to[out = -90, in = -90, distance = 4.18879] (0.666667, 0)
	to[out = 90, in = 180, distance = 4.18879] (0.947903, 0.222222);
\draw[help lines] (0.5, 0) circle (0.5);
\draw[fill] (0.0520968, 0.222222) circle (0.0166667);
\draw[fill] (0.947903, 0.222222) circle (0.0166667);
\draw[fill] (0, 0) circle (0.0166667);
\draw[fill] (1, 0) circle (0.0166667);
\end{scope}
\begin{scope}[shift={(2*\ddx, -\ddy)}]
\draw[thick] (0, 0) to (1, 0);
\draw[thick] (0.0285955, 0.166667)
	to[out = 0, in = 90, distance = 10.472] (0.833333, 0)
	to[out = -90, in = -90, distance = 2.09439] (0.666667, 0)
	to[out = 90, in = 90, distance = 2.09439] (0.5, 0)
	to[out = -90, in = -90, distance = 2.09439] (0.333333, 0)
	to[out = 90, in = 90, distance = 2.09439] (0.166667, 0)
	to[out = -90, in = 180, distance = 10.472] (0.971405, -0.166667);
\draw[help lines] (0.5, 0) circle (0.5);
\draw[fill] (0.0285955, 0.166667) circle (0.0166667);
\draw[fill] (0.971405, -0.166667) circle (0.0166667);
\draw[fill] (0, 0) circle (0.0166667);
\draw[fill] (1, 0) circle (0.0166667);
\end{scope}
\begin{scope}[shift={(3*\ddx, -\ddy)}]
\draw[thick] (0, 0) to (1, 0);
\draw[thick] (0.0520968, 0.222222)
	to[out = 0, in = 90, distance = 4.18879] (0.333333, 0)
	to[out = -90, in = -90, distance = 4.18879] (0.666667, 0)
	to[out = 90, in = 180, distance = 4.18879] (0.947903, 0.222222);
\draw[help lines] (0.5, 0) circle (0.5);
\draw[fill] (0.0520968, 0.222222) circle (0.0166667);
\draw[fill] (0.947903, 0.222222) circle (0.0166667);
\draw[fill] (0, 0) circle (0.0166667);
\draw[fill] (1, 0) circle (0.0166667);
\end{scope}
\begin{scope}[shift={(4*\ddx, -\ddy)}]
\draw[thick] (0, 0) to (1, 0);
\draw[thick] (0.0520968, 0.222222)
	to[out = 0, in = 90, distance = 4.18879] (0.333333, 0)
	to[out = -90, in = -90, distance = 4.18879] (0.666667, 0)
	to[out = 90, in = 180, distance = 4.18879] (0.947903, 0.222222);
\draw[help lines] (0.5, 0) circle (0.5);
\draw[fill] (0.0520968, 0.222222) circle (0.0166667);
\draw[fill] (0.947903, 0.222222) circle (0.0166667);
\draw[fill] (0, 0) circle (0.0166667);
\draw[fill] (1, 0) circle (0.0166667);
\end{scope}
\begin{scope}[shift={(-\ddx, -2*\ddy)}]
\draw[thick] (0, 0) to (1, 0);
\draw[thick] (0.0417424, 0.2)
	to[out = 0, in = 90, distance = 2.51327] (0.2, 0)
	to[out = -90, in = -90, distance = 2.51327] (0.4, 0)
	to[out = 90, in = 90, distance = 2.51327] (0.6, 0)
	to[out = -90, in = -90, distance = 2.51327] (0.8, 0)
	to[out = 90, in = 180, distance = 2.51327] (0.958258, 0.2);
\draw[help lines] (0.5, 0) circle (0.5);
\draw[fill] (0.0417424, 0.2) circle (0.0166667);
\draw[fill] (0.958258, 0.2) circle (0.0166667);
\draw[fill] (0, 0) circle (0.0166667);
\draw[fill] (1, 0) circle (0.0166667);
\end{scope}
\begin{scope}[shift={(0*\ddx, -2*\ddy)}]
\draw[thick] (0, 0) to (1, 0);
\draw[thick] (0.0520968, 0.222222)
	to[out = 0, in = 90, distance = 4.18879] (0.333333, 0)
	to[out = -90, in = -90, distance = 4.18879] (0.666667, 0)
	to[out = 90, in = 180, distance = 4.18879] (0.947903, 0.222222);
\draw[help lines] (0.5, 0) circle (0.5);
\draw[fill] (0.0520968, 0.222222) circle (0.0166667);
\draw[fill] (0.947903, 0.222222) circle (0.0166667);
\draw[fill] (0, 0) circle (0.0166667);
\draw[fill] (1, 0) circle (0.0166667);
\end{scope}
\begin{scope}[shift={(1*\ddx, - 2*\ddy)}]
\draw[thick] (0, 0) to (1, 0);
\draw[thick] (0.0520968, 0.222222)
	to[out = 0, in = 90, distance = 4.18879] (0.333333, 0)
	to[out = -90, in = -90, distance = 4.18879] (0.666667, 0)
	to[out = 90, in = 180, distance = 4.18879] (0.947903, 0.222222);
\draw[help lines] (0.5, 0) circle (0.5);
\draw[fill] (0.0520968, 0.222222) circle (0.0166667);
\draw[fill] (0.947903, 0.222222) circle (0.0166667);
\draw[fill] (0, 0) circle (0.0166667);
\draw[fill] (1, 0) circle (0.0166667);
\end{scope}
\end{tikzpicture}
    \caption{Construction of a meander via a rooted tree.}
    \label{fig: example of constructing}
\end{figure}

A meander may admit many such tree presentations. We single out a \emph{canonical presentation} by imposing two constraints: (1) every vertex is labeled by a prime meander, and (2) there is no edge oriented from a snake to a direct snake (recall Definition~\ref{def: direct and inverse snakes}). 

\begin{thm}
     Each open meander can be canonically constructed using snakes and irreducible meanders. 
\end{thm}
\begin{proof}
    The existence of such a presentation is guaranteed by Lemma~\ref{lem: prime submeander exist}. 
    Uniqueness is proved by induction on the depth of the tree. If the depth is zero (i.\,e. the meander is prime), there is nothing to prove. Assume $M$ has a tree presentation of depth $d>0$ satisfying (1)--(2). Note that the leaves of the tree correspond to submeanders in $M$. Moreover, we argue that this set of leaves is uniquely determined and corresponds precisely to the set of all irreducible submeanders and maximal snakes in $M$.
    \begin{itemize}
        \item If an irreducible submeander of $M$ were not represented by a leaf, it would be part of an internal vertex. This would contradict constraint (1).
        \item If a leaf labeled with a snake represented a non-maximal snake, it would have to be a direct snake connected to another snake, as the only insertion that produces a snake is inserting a direct snake into a snake (this follows from Lemma~\ref{lem: snakes classification}). This would contradict constraint (2).
    \end{itemize}
    Therefore, the set of submeanders corresponding to the leaves is uniquely determined. If we cut these submeanders from $M$ we obtain a meander $M_1$ and its tree presentation of depth $d-1$ still satisfying (1)--(2). By the induction hypothesis, that presentation is unique, and therefore the original presentation of $M$ is unique as well.
\end{proof}

We will consider another way of constructing meanders. For this purpose, we need to introduce a new class of meanders. 
\begin{definition} \label{def: iterated snakes}
    A meander is called an \emph{iterated snake} if no irreducible meanders occur in its decomposition.
\end{definition}

\begin{figure}[h]
    \centering
    \begin{tikzpicture}
        \draw[thick] (0, 0) to (4, 0);
        \draw[ultra thick] (0.16697, 0.8) to[out = 0, in = 90, distance = 15.0796] (1.2, 0)
         to[out = -90, in = -90, distance = 5.02655] (0.8, 0)
         to[out = 90, in = 90, distance = 5.02655] (0.4, 0)
         to[out = -90, in = -90, distance = 25.1327] (2.4, 0)
         to[out = 90, in = 90, distance = 5.02655] (2, 0)
         to[out = -90, in = -90, distance = 5.02655] (1.6, 0)
         to[out = 90, in = 90, distance = 25.1327] (3.6, 0)
         to[out = -90, in = -90, distance = 5.02655] (3.2, 0)
         to[out = 90, in = 90, distance = 5.02655] (2.8, 0)
        to[out = -90, in = 180, distance = 15.0796] (3.76887, -0.933333, 0);
        \draw[help lines] (2, 0) circle (2);
        \draw[fill] (0.16697, 0.8) circle (0.057735);
        \draw[fill] (3.76887, -0.933333) circle (0.057735);
        \draw[fill] (0, 0) circle (0.057735);
        \draw[fill] (4, 0) circle (0.057735);
    \end{tikzpicture}
    \hspace{1.5cm}
    \begin{tikzpicture}
        \draw[thick] (0, 0) to (4, 0);
        \draw[ultra thick] (0.0833703, 0.571429) to[out = 0, in = 90, distance = 46.6751] (3.71429, 0)
         to[out = -90, in = -90, distance = 17.952] (2.28571, 0)
         to[out = 90, in = 90, distance = 3.59039] (2.57143, 0)
         to[out = -90, in = -90, distance = 3.59039] (2.85714, 0)
         to[out = 90, in = 90, distance = 3.59039] (3.14286, 0)
         to[out = -90, in = -90, distance = 3.59039] (3.42857, 0)
         to[out = 90, in = 90, distance = 17.952] (2, 0)
         to[out = -90, in = -90, distance = 17.952] (0.571429, 0)
         to[out = 90, in = 90, distance = 10.7712] (1.42857, 0)
         to[out = -90, in = -90, distance = 3.59039] (1.14286, 0)
         to[out = 90, in = 90, distance = 3.59039] (0.857143, 0)
         to[out = -90, in = -90, distance = 10.7712] (1.71429, 0)
         to[out = 90, in = 90, distance = 17.952] (0.285714, 0)
        to[out = -90, in = 180, distance = 46.6751] (3.88562, -0.666667, 0);
        \draw[help lines] (2, 0) circle (2);
        \draw[fill] (0.0833703, 0.571429) circle (0.057735);
        \draw[fill] (3.88562, -0.666667) circle (0.057735);
        \draw[fill] (0, 0) circle (0.057735);
        \draw[fill] (4, 0) circle (0.057735);
    \end{tikzpicture}
    \caption{Example of iterated snakes.}
    \label{fig: example of iterated snakes}
\end{figure}

Iterated snakes form a simple class of meanders (we discuss this class in detail in Section~\ref{sec: iterated snakes}). Examples of iterated snakes are shown in Figure~\ref{fig: example of iterated snakes}. Using iterated snakes, the uniqueness of the construction can be stated as follows: each meander admits a unique tree presentation in which the vertices are labeled by either irreducible meanders or iterated snakes, with the additional condition that no edge is oriented from an iterated snake to another iterated snake. 

\begin{thm}\label{thm: main}
    Each open meander can be canonically constructed using iterated snakes and irreducible meanders. 
\end{thm}

\begin{remark}
    In fact, the decomposition of non-singular meanders can be expressed without passing to the singular ones. To do this, we need to insert meanders with an even number of intersections into \emph{cups} --- submeanders of total order two. However, this approach is much less convenient: the description of the decomposition becomes very cumbersome, and many constructions become less natural. In particular, the equation for the generating function of the meander numbers is not so easy to write (see Theorem~\ref{thm: generating function equation}). 
\end{remark}

\subsection{Equation for the generating function.}
The construction of meanders via trees allows us to express the generating function of all meanders in terms of the generating functions of iterated snakes and of irreducible meanders.

We use the following notation. Let $\{A_{n,k}\}_{n\geq 0,\, k\geq 0}$ be a bivariate  sequence of numbers, and let $f(x,t) = \sum\limits_{n\geq 0, k\geq 0} A_{n,k}x^n t^k$ be its generating function. 
We decompose $f(x,t)$ into two parts (the ``odd'' part $f^{(1)}(x,t)$ and the ``even'' part $f^{(2)}(x,t)$) in the following way:
$$
f(x, t) = \underbrace{\sum\limits_{n\geq 0, k\geq 0} A_{2n+1,k}x^{2n+1} t^k}_{=:f^{(1)}(x,t)} + \underbrace{\sum\limits_{n\geq 0, k\geq 0} A_{2n,k}x^{2n} t^k}_{=:f^{(2)}(x,t)}.
$$
Let $f(x,t)$ and $g(x,t)$ be two bivariate generating functions with $g(0,0) = 0$. Then we use the following notation: 
$$
(f\boxdot g)(x,t):= f\left(g^{(1)}(x,t),\, g^{(2)}(x,t)\right).
$$

This notation has a direct combinatorial meaning for meanders.
The variable $x$ marks transverse intersections, and $t$ marks non-transverse ones. A meander with an odd number of transverse intersections can be inserted only at a transverse intersection, whereas one with an even number of transverse intersections can be inserted only at a non-transverse intersection. Thus insertion corresponds to the above composition of generating functions via $\boxdot$.


\begin{thm}\label{thm: generating function equation}
Let 
\begin{align*}
    &\gfM(x,t) = \sum_{n,k}\M_{n,k}x^nt^k, \\
    &\gfMIrr(x,t) = \sum_{n,k}\Mirr_{n,k}x^nt^k,\\
    &\gfMIS(x,t) = \sum_{n,k}\MIS_{n,k}x^nt^k
\end{align*}
be the generating functions for the numbers of equivalence classes of meanders, of irreducible meanders, and of iterated snakes, respectively. Then
\begin{align} \label{eq: generating function}
    \gfM(x,t) = x + t +\left(\gfMIrr\boxdot \gfM\right)(x,t) + \left(\gfMIS\boxdot\left(x+t+\left(\gfMIrr\boxdot \gfM\right)\right)\right)(x,t).
\end{align}
\end{thm}
\begin{proof}
By Theorem~\ref{thm: main}, every meander is obtained in exactly one of the following ways:
either it has total order $1$ (contributing the term $x+t$), or else it is represented by a tree whose root is labeled with
\begin{itemize}
  \item an irreducible meander, and its subtrees can represent arbitrary meanders (contributing $\gfMIrr \boxdot \gfM$);
  \item an iterated snake, and its subtrees are not rooted at iterated snakes (contributing $\gfMIS \boxdot (x+t+\gfMIrr \boxdot \gfM)$).
\end{itemize}
Insertions are encoded by $\boxdot$. Since the generating functions involved have zero constant term (there is no empty meander), these substitutions are well-defined. Collecting the three disjoint cases yields \eqref{eq: generating function}.
\end{proof}

\begin{remark}
    In equation~\eqref{eq: generating function} the generating function $\gfMIS(x,t)$ is known (see Section~\ref{sec: iterated snakes}), while for $\gfMIrr(x,t)$ only a few terms are known (see Section~\ref{sec: irreducible meanders}). 
\end{remark}

It follows from Theorem~\ref{thm: generating function equation} that knowing the numbers $\left\{\Mirr_{n,k}\right\}_{n,k}$ and $\left\{\MIS_{n,k}\right\}_{n,k}$ we could easily compute the numbers $\left\{\M_{n,k}\right\}_{n,k}$ themselves. As it is shown in Section~\ref{sec: iterated snakes}, $\left\{\MIS_{n,k}\right\}_{n,k}$ can be computed quite easily even for large values of $n$ and $k$. On the contrary, we do not know any efficient algorithm for finding the numbers $\left\{\Mirr_{n,k}\right\}_{n,k}$ other than brute force (with one exception, see Theorem~\ref{thm: cup 3 irreducible meanders}). Thus the problem of computing the numbers $\left\{\M_{n,k}\right\}_{n,k}$ is reduced to the problem of computing the numbers $\left\{\Mirr_{n,k}\right\}_{n,k}$.

\section{Iterated snakes}\label{sec: iterated snakes}
In this section, we discuss some properties of iterated snakes. In particular, we give an effective formula to calculate the number of equivalence classes of iterated snakes and discuss some results about the asymptotic behavior of these numbers. 

\begin{lemma} \label{lem: number of snakes}
    Let $\MS_{n,k}$ be the number of pairwise non-equivalent snakes of order~$(n,k)$. Then 
$$
\MS_{n,k} = \begin{cases}
0 & n+k<2, \\
\binom{n+k}{n} & n \text{ is even,} \\
2\binom{n+k}{n} & n \text{ is odd}.
\end{cases}
$$    
\end{lemma}
\begin{proof}
    Let $M$ be a snake of order~$(n,k)$. By Lemma~\ref{lem: snakes classification} the permutation of $M$ is either ${(1,2,\dots, n+k)}$ or ${(n+k, n+k-1,\dots, 1)}$. Arbitrary $n$ labels in the permutation can correspond to transverse intersections, and all the different ways of choosing such labels lead to different snakes. It remains to note that there are no inverse snakes of order $(2n, k)$ for $n\geq 0$.
\end{proof}
\begin{corollary}
Let $\gfMS(x,t) = \sum\limits_{n\geq 0, k \geq 0}\MS_{n,k}x^nt^k$ be the generating function of the numbers of equivalence classes of snakes. Then
$$
\gfMS(x,t) =-\frac{3}{2 (t+x-1)}-\frac{1}{2(x - t + 1)}-t-2 x-1.
$$
\end{corollary}

\begin{remark}\label{rmrk: are meander of order 1 snakes}
    If we consider meanders of total order one as snakes, the generating function and the corresponding numerical sequences take a somewhat more natural form. However, this requires additional caveats when discussing the factorization. Therefore, we prefer not to consider them as snakes.  
\end{remark}

Now, we can move on to the enumeration of iterated snakes.
\begin{thm}\label{thm: generating function of iterated snakes}
    Let $\MIS_{n,k}$ be the number of pairwise non-equivalent iterated snakes of order $(n,k)$, and let $\gfMIS(x,t)$ be the generating function of these numbers. Then 
    \begin{align}    \label{eq: generating function of odd iterated snakes}
    \gfMIS^{(1)}(x,t) = \gfMS^{(1)}\left(x +  \frac{\gfMIS^{(1)}(x,t)}{2},\ t\right), \\
     \label{eq: generating function of even iterated snakes}
    \gfMIS^{(2)}(x,t) = \gfMS^{(2)}\left(x +  \frac{\gfMIS^{(1)}(x,t)}{2},\ t\right).
    \end{align}
\end{thm}
\begin{proof}
    The uniqueness of the decomposition requires forbidding insertions of direct snakes into other snakes (see Section~\ref{subsec: decomposition}). Therefore, each iterated snake is obtained by a sequence of insertions of inverse snakes into some snake. Note that inverse snakes have an odd number of transverse intersections, so the generating function of inverse snakes is $\frac{1}{2}\gfMS^{(1)}(x,t)$. 
\end{proof}
\begin{remark}
    We can use the equations~\eqref{eq: generating function of odd iterated snakes}--\eqref{eq: generating function of even iterated snakes} to explicitly find $\gfMIS(x,t)$, but the resulting formula is too cumbersome, so we do not include it in this paper.
\end{remark}

We can use Theorem~\ref{thm: generating function of iterated snakes} to calculate the numbers $\left\{\MIS_{n,k}\right\}_{n,k}$ directly. To write the resulting formula, we need to introduce some notation. Let $n$ and $k$ be non-negative integers. Then
\begin{itemize}
    \item $\delta(n) := (n\ \mathrm{mod}\ 2)$;
    \item $\mu \vdash (n,k)$ means that $\mu = \left((a_1, b_1)^{s_{(a_1,b_1)}};(a_2, b_2)^{s_{(a_2,b_2)}};\dots;(a_r, b_r)^{s_{(a_r,b_r)}}\right)$ is a partition of $(n ,k)$, (i.\,e. ${\sum\limits_{(a,b)\in \mu} s_{(a, b)}a = n}$ and ${\sum\limits_{(a,b)\in \mu} s_{(a, b)}b = k}$);
    \item if $\mu \vdash (n,k)$, then $|\mu|:=\sum\limits_{(a,b)\in \mu} s_{(a, b)}$;
    \item if $\mu \vdash (n,k)$, then $\binom{n}{\mu} := \frac{n!}{(n - |\mu|)!\prod\limits_{(a,b)\in \mu} (s_{(a, b)}!)}$.
\end{itemize}

\begin{corollary} \label{cor: formula for iterated snakes}
    For $\left\{\MIS_{n,k}\right\}_{n,k}$ the following recurrence relation holds: 
    \begin{equation} \label{eq: recurrence formula for iterated snakes}
    \MIS_{n, k} = 
    \sum_{r = 1}^{\frac{n}{2}} 
    \sum_{l = 0}^{k}
    \MS_{2r+\delta(n), l}
     \left(
    \sum_{\mu}
    \binom{2r+\delta(n)}{\mu} 
    \prod_{(i_1, i_2) \in \mu}\left(\frac{\MIS_{2i_1+1, i_2}}{2}\right)^{s_{(i_1,i_2)}}
    \right),
\end{equation}
where the third summation goes through all partitions $\mu$ of  $\left(\frac{n-2r-\delta(n)}{2}, k - l\right)$, such that $|\mu| \leq 2r+\delta(n)$.   
\end{corollary}

We have used equations~\eqref{eq: generating function of odd iterated snakes} and~\eqref{eq: generating function of even iterated snakes} to calculate the numbers $\left\{\MIS_{n,k}\right\}_{n\leq 100,k \leq 30}$ and $\left\{\MIS_{n,0}\right\}_{n\leq 500}$. The calculation was done with a {C++} program (the code of the program and the results of the calculations are available at~\cite{Bcode}).

\subsection{Number sequences associated with iterated snakes}\label{sec: iterated snakes connections}
The numbers $\left\{\MIS_{n, 0}\right\}_{n\geq 2}$ are the same as the numbers of $\mathrm{P}$-graphs with $2n$ edges defined in the work~\cite{R86}\footnote{$\mathrm{P}$-graphs are nothing but a graph-theoretic reformulation of iterated snakes, so we omit the precise definition.} (see~\cite[A007165]{oeis}). It is a well-studied number sequence, so we know the exact asymptotics of its even and odd parts (see~\cite[A100327]{oeis} and~\cite[A003169]{oeis} and references therein):
\begin{align*}
    \MIS_{2n+1, 0} &\sim \frac{\sqrt{4046 + 1122\sqrt{17}}}{136\sqrt{\pi}}\left(\frac{71 + 17\sqrt{17}}{16}\right)^n n^{-\frac{3}{2}},\\
     \MIS_{2n, 0} &\sim \frac{\sqrt{33\sqrt{17} - 119}}{4\sqrt{34\pi}}\left(\frac{71 + 17\sqrt{17}}{16}\right)^n n^{-\frac{3}{2}}.
\end{align*}

\begin{remark}
    Note that the asymptotics of the numbers of non-singular iterated snakes with even and odd numbers of intersections are slightly different. We expect a similar phenomenon for irreducible meanders (this agrees with the results of numerical experiments: see Figure~\ref{fig: growth rate plot}).
\end{remark}

We found other interesting number sequences among the numbers of iterated snakes. The numbers $\left\{\MIS_{1, k}\right\}_{k\geq 1}$ coincide\footnote{To be fully consistent with this sequence, a meander of order $(1,0)$ must also be considered a snake (see Remark~\ref{rmrk: are meander of order 1 snakes}).} with the numbers~\cite[A007070]{oeis} which form the sequence of numbers satisfying the following recurrence formula: $a_n = 4a_{n-1} - 2a_{n-2}$ where $a_0 = 1$ and $a_1 = 4$. It would be interesting to find a combinatorial explanation for why this recurrence relation holds for $\left\{\MIS_{1, k}\right\}_{k\geq 1}$.

Another finding links the meander counting problem to a well-known open combinatorial problem --- the counting of polyominoes, defined in~\cite{G54}. In~\cite{CFMRR07} the authors introduce a subclass of polyominoes that can be enumerated via {2-compositions}. The sum of the entries in the top rows of all 2-compositions of $k$ (\cite[A181292]{oeis}) coincides with the numbers $\left\{\MIS_{2, k}\right\}_{k\geq 0}$. As in the previous case, at the moment we do not know why these sequences match.

\section{Irreducible meanders}\label{sec: irreducible meanders}
In this section, we prove some elementary properties of irreducible meanders, in particular about their asymptotics. 

As we said before, to calculate the numbers $\left\{\M_{n, k}\right\}_{n,k}$ one only needs to know $\left\{\Mirr_{n,k}\right\}_{n,k}$ (because the numbers $\left\{\MIS_{n,k}\right\}_{n,k}$ are easy to calculate). Unfortunately, we do not know any suitable way to calculate these numbers. We used a fairly straightforward brute-force algorithm (similar to the one described in~\cite{SL12}) to calculate $\left\{\M_{n,k}^{(Ir)}\right\}_{n,k}$ for $n + 2k \leq 38$. The results of the calculations can be found in~\cite{Bcode} and partially in Appendix~\ref{app: table}. In contrast to iterated snakes, no known numerical sequences were found among the numbers $\left\{\Mirr_{n,k}\right\}_{n,k}$.

For what follows, it will be useful for us to transform singular meanders into non-singular ones. Let us describe this procedure. There exists a projection 
$$c:\bigcup_{n, k \geq 0}\mathfrak{M}_{n,k} \to \bigcup_{n\geq 0}\mathfrak{M}_{n, 0}$$ defined by the insertion of a meander of order $(2,0)$ at each non-transverse intersection, so the meander of order $(n,k)$ is mapped to the non-singular meander of order $(n+2k, 0)$. Note that this projection is surjective, and if $M$ and $M'$ are non-equivalent irreducible meanders, then $c(M)$ is not equivalent to $c(M')$ (due to the uniqueness of the factorization). A non-singular meander of order~$(n, 0)$ is called \emph{almost irreducible} if it is an image of an irreducible meander under the map $c$ (examples of almost irreducible meanders are presented in Figure~\ref{fig: symmetry of irreducible meanders}). 

\begin{prop}
 $\Mirr_{n,k} \equiv 0\ \mathrm{mod}\ 2$.
\end{prop}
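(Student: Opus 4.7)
The plan is to exhibit a fixed-point-free involution on the set of equivalence classes of irreducible meanders of order $n$ with $c$ cups. The construction will depend slightly on the parity of $n$, but in both cases the candidate involution is a simple relabelling of the four distinguished endpoints. Since the underlying pair $(m,l) \subset D$ is never altered, each candidate trivially preserves the order $n$, the property of being irreducible, and the number of cups $\cups(M)$.

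For $n$ odd, the condition on $\{p_3, p_4\}$ in the definition of a meander forces the endpoints $p_1$ and $p_3$ to lie in opposite half-discs bounded by $l$. The involution $\rho$ obtained by reflecting $M$ across $l$, and relabelling so that the new $p_1$ lies above the new $p_2$, is then well-defined. At the level of permutations, $\rho$ sends $\pi = (\pi_1, \ldots, \pi_n)$ to $(\pi_n, \pi_{n-1}, \ldots, \pi_1)$, and a fixed point would require $\pi_j = \pi_{n+1-j}$ for every $j$, which is impossible for a permutation of length $\geq 2$. Thus $\rho$ is fixed-point-free and $\M_{n,c}^{(Irr)}$ is even for every odd $n \geq 3$.

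For $n$ even both endpoints of $m$ lie in the upper half-disc and $\rho$ is no longer well-defined, so I would use instead the involution $\iota$ that simultaneously swaps $p_1 \leftrightarrow p_3$ and $p_2 \leftrightarrow p_4$. A direct cyclic-order check on $\partial D$ (the cyclic arrangement is either $(p_1, p_2, p_3, p_4)$ or $(p_1, p_2, p_4, p_3)$, and in both cases the swapped points $\{p_1, p_2\}$ remain in a single arc of $\partial D \setminus \{p_3, p_4\}$) confirms that $\iota$ is always well-defined on meanders. At the level of permutations, $\iota$ takes $\pi$ to $\pi^\ast$ with $\pi^\ast_j = n+1-\pi_{n+1-j}$, so its fixed points are the ``anti-palindromic'' meanders, i.e.\ those admitting a reflection symmetry across the perpendicular bisector of $l$.

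The crux of the argument is the assertion that no irreducible meander of even order $n \geq 4$ is anti-palindromic. In such a meander the reflection symmetry pairs the intersection labelled $i$ on $l$ with the one labelled $n+1-i$, and this pairing must be respected by both the top non-crossing matching on $\{p_1, 1, \ldots, n, p_3\}$ and the bottom one on $\{1, \ldots, n\}$. Combining these non-crossing conditions with the symmetry, one argues by induction outward from the centre that the permutation $\pi$ must contain a contiguous block of positions carrying a contiguous block of values of some size $w$ with $2 < w < n$ (in other words, the symmetric nesting propagates from the centre until it is interrupted, and the interruption itself defines a non-trivial interval). This interval produces a disk of correct position with $2 < \w(D) < n$, contradicting irreducibility. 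The main obstacle is precisely this combinatorial step: making rigorous the claim that the anti-palindromic symmetry together with the non-crossing condition on both sides of $l$ always yields an intermediate interval, which by my calculations on small $n$ appears to hold uniformly but requires a careful inductive argument.
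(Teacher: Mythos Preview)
Your odd-$n$ argument via the reflection $\rho$ is correct, and it actually repairs a genuine defect in the paper's proof. The paper uses the single involution $\iota:\pi \mapsto (n{+}1-\pi_n,\ldots,n{+}1-\pi_1)$ for all $n$ and asserts that any fixed point is ``a concatenation of two copies of some meander of order $n/2$'', hence not irreducible. For odd $n$ this assertion fails outright: both irreducible meanders of order $9$ displayed in the paper's own appendix have permutations $(3,4,9,8,5,2,1,6,7)$ and $(7,6,1,2,5,8,9,4,3)$, each of which satisfies $\pi_j+\pi_{10-j}=10$ and is therefore \emph{fixed} by $\iota$. Your $\rho$ (reversing $m$ only, sending $\pi$ to $(\pi_n,\ldots,\pi_1)$) instead swaps these two meanders, and is fixed-point free on all permutations of length $\ge 2$; so your treatment of the odd case is not merely simpler than the paper's but is in fact the one that works.

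For even $n$ you revert to $\iota$, exactly as the paper does, and you have correctly isolated the one substantive step: ruling out irreducible anti-palindromic meanders of even order. Here the paper's ``concatenation'' claim does appear to hold---concretely, for even $n$ the set $\{\pi_1,\ldots,\pi_{n/2}\}$ is forced to be an interval of consecutive integers on $l$, which exhibits a disk in correct position of width $n/2$---but neither the paper nor your sketch proves it. Your ``induction outward from the centre'' is aimed at the right phenomenon; the missing idea is that the interval one should be looking for is precisely the image of the first half of $m$. Once that is the target, the anti-palindromic symmetry together with the non-crossing conditions on both matchings becomes a concrete constraint (only the single middle arc $\pi_{n/2}\text{--}\pi_{n/2+1}$ can connect the two halves), and the argument acquires a clear terminating structure rather than the open-ended ``interruption'' you describe.
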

\begin{proof}
Let $M = (D, (p_1, p_2, p_3, p_4), (m,l))$ be an irreducible meander of order $(n,k)$. If $n$ is even, consider the meander $M' = (D, (p_3, p_4, p_1, p_2), (m,l))$. If $n$ is odd, let $M' = (D, (p_3, p_2, p_1, p_4), (m,l))$. In both cases $M'$ is irreducible. To conclude the proof we need to show that $M$ and $M'$ are not equivalent. Since there is a one-to-one correspondence between irreducible meanders of order $(n,k)$ and almost irreducible meanders of total order $N=n+2k$ with precisely $k$ cups, it suffices to show that $c(M)$ is not equivalent to $c(M')$. 

Let  $(a_1,a_2,\dots, a_{N})$ be the permutation of $c(M)$. If $n$ is odd, the permutation of $c(M')$ is $(a_N,a_{N-1},\dots, a_{1})$, so $M$ is not equivalent to $M'$. 

Now let us consider the case when $n$ is even. In this case the permutation of $c(M')$ is given by $(\sigma(a_{N}),\sigma(a_{N-1}),\dots,\sigma(a_1))$, where $\sigma(i) = N+1-i$; see the examples in Figure~\ref{fig: symmetry of irreducible meanders}. 

\begin{figure}[h]
       \centering
       \begin{tikzpicture}[scale = 3]
\draw[thick] (0, 0) to (1, 0);
\draw[ultra thick] (0.0355579, 0.185185)
	to[out = 0, in = 90, distance = 4.18879] (0.333333, 0)
	to[out = -90, in = -90, distance = 4.18879] (0.666667, 0)
	to[out = 90, in = 90, distance = 1.39626] (0.777778, 0)
	to[out = -90, in = -90, distance = 6.98132] (0.222222, 0)
	to[out = 90, in = 90, distance = 1.39626] (0.111111, 0)
	to[out = -90, in = -90, distance = 9.77384] (0.888889, 0)
	to[out = 90, in = 90, distance = 4.18879] (0.555556, 0)
	to[out = -90, in = -90, distance = 1.39626] (0.444444, 0)
	to[out = 90, in = 180, distance = 6.98132] (0.964442, 0.185185);
\draw[help lines] (0.5, 0) circle (0.5);
\draw[fill] (0.0355579, 0.185185) circle (0.0166667);
\draw[fill] (0.964442, 0.185185) circle (0.0166667);
\draw[fill] (0, 0) circle (0.0166667);
\draw[fill] (1, 0) circle (0.0166667);
\node (name) at (0.5, -0.6) {$(3,6,7,2,1,8,5,4)$};
\end{tikzpicture}
\hspace{2cm}
\begin{tikzpicture}[scale = 3]
\draw[thick] (0, 0) to (1, 0);
\draw[ultra thick] (0.0355579, 0.185185)
	to[out = 0, in = 90, distance = 6.98132] (0.555556, 0)
	to[out = -90, in = -90, distance = 1.39626] (0.444444, 0)
	to[out = 90, in = 90, distance = 4.18879] (0.111111, 0)
	to[out = -90, in = -90, distance = 9.77384] (0.888889, 0)
	to[out = 90, in = 90, distance = 1.39626] (0.777778, 0)
	to[out = -90, in = -90, distance = 6.98132] (0.222222, 0)
	to[out = 90, in = 90, distance = 1.39626] (0.333333, 0)
	to[out = -90, in = -90, distance = 4.18879] (0.666667, 0)
	to[out = 90, in = 180, distance = 4.18879] (0.964442, 0.185185);
\draw[help lines] (0.5, 0) circle (0.5);
\draw[fill] (0.0355579, 0.185185) circle (0.0166667);
\draw[fill] (0.964442, 0.185185) circle (0.0166667);
\draw[fill] (0, 0) circle (0.0166667);
\draw[fill] (1, 0) circle (0.0166667);
\node (name) at (0.5, -0.6) {$(5,4,1,8,7,2,3,6)$};
\end{tikzpicture}
       \caption{Examples of almost irreducible meanders $c(M)$ and $c(M')$.}
       \label{fig: symmetry of irreducible meanders}
\end{figure}

Let $\tilde{M} = (D, (p_1, p_2, p_3, p_4), (\tilde{m},l))$ be a non-singular meander equivalent to $c(M)$. Since for non-singular meanders the permutation uniquely determines the equivalence class of a meander, $\tilde{M}$ is equivalent to $c(M')$ if and only if, for each $i=1,\dots, N$, we have $a_i = N+1-a_{N+1-i}$. Assume that  $\tilde{M}$ is equivalent to $c(M')$. 
In that case $a_1 \leq \frac{N}{2}$. Otherwise, $a_N = N+1 - a_1 < \frac{N}{2}+1$, and the subarc of $\tilde{m}$ connecting $p_1$ to the point labeled $a_1$ would intersect the subarc of $\tilde{m}$ connecting $p_3$ to the point labeled $a_N$.

Let $r\geq1$ be the maximal integer such that $a_i \leq \frac{N}{2}$ for all $1 \leq i \leq r$ (thus $a_{N+1-i} > \frac{N}{2}$ for $1 \leq i \leq r$). If $a_{r+1} \neq a_{N-r}$, then the subarc of $\tilde{m}$ connecting the points labeled $a_r$ and $a_{r+1}$ would intersect the subarc of $\tilde{m}$ connecting the points labeled $a_{N-r+1}$ and $a_{N-r}$ (since both of these subarcs lie on the same side of $l$). Thus $a_{r+1} = a_{N-r}$, and therefore the permutation of $\tilde{M}$ is
$$
(a_1, \dots, a_{r-1}, a_r, a_{N-r}, a_{N-r+1},\dots, a_N),
$$
so $r=\frac{N}{2}$. By Lemma~\ref{lem: submeanders and permutations}, $\tilde{M}$ contains a submeander of total order $\frac{N}{2}$. Since an almost irreducible meander of total order $N$ has only submeanders of total orders $1$, $2$, and $N$, and since there are no almost irreducible meanders of total order less than $8$, the assumption that $c(M)$ is equivalent to $c(M')$ leads to a contradiction.
\end{proof}

\begin{prop}\label{prop: irreducible more than 2 cups}
 $\Mirr_{n,k} = 0$, if $k<3$. In particular, there are no non-singular irreducible meanders.
\end{prop}
\begin{proof}
    Closed meanders can be represented via a pair of words in Dyck language (see~\cite{LZ92}). A Dyck language consists of strings of balanced parentheses, where each opening parenthesis ``('' is correctly matched and nested with a closing parenthesis ``)''. This is a well-known object in combinatorics, so we limit ourselves to this brief description (for details, see any book on combinatorics, for example~\cite{S99}). To work with (open) meanders, we extend the alphabet by adding an extra symbol --- ``|''. The procedure of matching a non-singular meander~$M$ to a pair of words $(A_M, B_M)$ in an extended Dyck language is shown in Figure~\ref{fig: meanders and Dyck}.

\begin{figure}[h]
    \centering
\begin{tikzpicture}[scale = 4.5]
    \draw[thick] (0, 0) to (1, 0);
    \draw[ultra thick] (0.0196773, 0.138889) to[out = 0, in = 90, distance = 5.23599] (0.416667, 0)
        to[out = -90, in = -90, distance = 1.0472] (0.333333, 0)
        to[out = 90, in = 90, distance = 1.0472] (0.25, 0)
        to[out = -90, in = -90, distance = 3.14159] (0.5, 0)
        to[out = 90, in = 90, distance = 1.0472] (0.583333, 0)
        to[out = -90, in = -90, distance = 5.23599] (0.166667, 0)
        to[out = 90, in = 90, distance = 1.0472] (0.0833333, 0)
        to[out = -90, in = -90, distance = 7.33038] (0.666667, 0)
        to[out = 90, in = 90, distance = 3.14159] (0.916667, 0)
        to[out = -90, in = -90, distance = 1.0472] (0.833333, 0)
        to[out = 90, in = 90, distance = 1.0472] (0.75, 0)
        to[out = -90, in = 180, distance = 3.14159] (0.933013, -0.25);
    \draw[help lines] (0.5, 0) circle (0.5);
    \draw[fill] (0.0196773, 0.138889) circle (0.0166667);
    \draw[fill] (0.933013, -0.25) circle (0.0166667);
    \draw[fill] (0, 0) circle (0.0166667);
    \draw[fill] (1, 0) circle (0.0166667);
\end{tikzpicture}
\begin{tikzpicture}[scale = 1.5]
    \draw [->, ultra thick] (0, 0) to (0.7, 0);
    \draw [white, fill] (-0.5,-1.5) circle (0.01);
    \draw [white, fill] (1,1.5) circle (0.01);
\end{tikzpicture}
\begin{tikzpicture}[scale = 4.5]
    \newcommand{\deltayy}{0.2}
    \draw[white] (0.5, 0) circle (0.5);
    \node (A) at (0.5, 0.65*\deltayy) {$A_M = ()()|()(())$};
    \node (B) at (0.5, -0.7*\deltayy) {$B_M = (((())))|()$};
    \begin{scope}
        \clip (0,0+ \deltayy) rectangle (1, 0.2+\deltayy);
        \draw[thick] (0, 0 + \deltayy) to (1, 0 + \deltayy);
        \draw[ultra thick] (0.416667, 0.2 + \deltayy) to[out = -90, in = 90, distance = 5.23599] (0.416667, 0 + \deltayy)
        to[out = -90, in = -90, distance = 1.0472] (0.333333, 0 + \deltayy)
        to[out = 90, in = 90, distance = 1.0472] (0.25, 0 + \deltayy)
        to[out = -90, in = -90, distance = 3.14159] (0.5, 0 + \deltayy)
        to[out = 90, in = 90, distance = 1.0472] (0.583333, 0 + \deltayy)
        to[out = -90, in = -90, distance = 5.23599] (0.166667, 0 + \deltayy)
        to[out = 90, in = 90, distance = 1.0472] (0.0833333, 0 + \deltayy)
        to[out = -90, in = -90, distance = 7.33038] (0.666667, 0 + \deltayy)
        to[out = 90, in = 90, distance = 3.14159] (0.916667, 0 + \deltayy)
        to[out = -90, in = -90, distance = 1.0472] (0.833333, 0 + \deltayy)
        to[out = 90, in = 90, distance = 1.0472] (0.75, 0 + \deltayy)
        to[out = -90, in = 180, distance = 3.14159] (0.75, -0.2 + \deltayy);
    \end{scope}
    \begin{scope}
        \clip (0,0- \deltayy) rectangle (1, -0.2 - \deltayy);
        \draw[thick] (0, 0- \deltayy) to (1, 0- \deltayy);
        \draw[ultra thick] (0.416667, 0.2- \deltayy) to[out = -90, in = 90, distance = 5.23599] (0.416667, 0- \deltayy)
        to[out = -90, in = -90, distance = 1.0472] (0.333333, 0- \deltayy)
        to[out = 90, in = 90, distance = 1.0472] (0.25, 0- \deltayy)
        to[out = -90, in = -90, distance = 3.14159] (0.5, 0- \deltayy)
        to[out = 90, in = 90, distance = 1.0472] (0.583333, 0- \deltayy)
        to[out = -90, in = -90, distance = 5.23599] (0.166667, 0- \deltayy)
        to[out = 90, in = 90, distance = 1.0472] (0.0833333, 0- \deltayy)
        to[out = -90, in = -90, distance = 7.33038] (0.666667, 0- \deltayy)
        to[out = 90, in = 90, distance = 3.14159] (0.916667, 0- \deltayy)
        to[out = -90, in = -90, distance = 1.0472] (0.833333, 0- \deltayy)
        to[out = 90, in = 90, distance = 1.0472] (0.75, 0- \deltayy)
        to[out = -90, in = 90, distance = 3.14159] (0.75, -0.2- \deltayy);
    \end{scope}
\end{tikzpicture}

    \caption{Correspondence between non-singular meander and a pair of words in the extended Dyck language.}
    \label{fig: meanders and Dyck}
\end{figure}

    A \emph{cup} in a non-singular meander is a submeander of order $(2,0)$. In terms of Dyck language it corresponds to a substring of the form $()$. The presentation of a non-singular meander as a pair of words in Dyck language shows that each non-singular meander of order greater than two has at least two cups. Furthermore, a non-singular meander $M$ has precisely two cups only if the corresponding pair of words $(A_M, B_M)$ is either $A_M = |(\dots()\dots)$ and $B_M = (\dots()\dots)|$ or $A_M = (\dots()\dots)|$ and $B_M = |(\dots()\dots)$. In both cases, it contains a submeander of order $(3,0)$. It follows that each almost irreducible meander has at least three cups, and thus the number of non-transverse intersections in an irreducible meander is at least three.
\end{proof}

\begin{thm}\label{thm: cup 3 irreducible meanders}
$$\Mirr_{n,3} =
\begin{cases}
    0 &  n \text{ is odd,}\\
    \varphi\left(\frac{n}{2}+4\right) - 2 & n \text{ is even,}
\end{cases}$$ where $\varphi(x)$ is Euler's totient function.
\end{thm}
\begin{proof}
Let $M$ be an almost irreducible meander of total order $n+6$ with precisely three cups (each such meander corresponds to an irreducible meander of order $(n, 3)$). We can encode $M$ with a pair of words $(A_M, B_M)$ in extended Dyck language, as it was done in the proof of Proposition~\ref{prop: irreducible more than 2 cups}. Since $M$ is almost irreducible, neither $A_M$ nor $B_M$ begins or ends with ``$|$''; otherwise the associated permutation would begin or end with $1$ or $n+6$, and hence, by Lemma~\ref{lem: submeanders and permutations}, $M$ would contain a submeander of total order $n+5$. It follows that 
\begin{align*}
    A_M &= 
    \underbrace{(\,(\,\dots\,(}_{n_1}\,
    \underbrace{)\,)\,\dots\,)}_{n_1}\,
    |\,|\,
    \underbrace{(\,(\,\dots\,(}_{n_2}\,
    \underbrace{)\,)\,\dots\,)}_{n_2},\\
    B_M &= 
    \underbrace{(\,(\,\dots\,(}_{n_1+n_2+1}\,
    \underbrace{)\,)\,\dots\,)}_{n_1+n_2+1},
\end{align*}
where $n + 6 = 2(n_1+n_2+1)$ with $n_1$ and $n_2$ being positive. In particular, we see that $n$ must be even. Thus, we can encode almost irreducible meanders with three cups via a pair of positive integers $(n_1, n_2)$. We show that such a pair of numbers corresponds to a meander if and only if $n_1+1$ and $n_2+1$ are coprime. The statement of the theorem follows from this immediately. Indeed, 
$$
\operatorname{GCD}(n_1+1, n_2+1) = \operatorname{GCD}\left(n_1+1, \frac{n}{2}+2 - n_1 +1\right) = \operatorname{GCD}\left(n_1+1, \frac{n}{2}+4\right).
$$
Thus we obtain
\begin{align*}
\Mirr_{n,3}
=& \sum_{\substack{n_1, n_2 > 0\\n + 6 = 2(n_1+n_2+1)\\ \operatorname{GCD}(n_1+1, n_2+1) = 1}} 1 
=\sum_{\substack{n_1 = 1\\ \operatorname{GCD}\left(n_1+1, \frac{n}{2}+4\right) = 1}}^{\frac{n}{2} + 1} 1 
=\sum_{\substack{n_1 = 2\\ \operatorname{GCD}\left(n_1, \frac{n}{2}+4\right) = 1}}^{\frac{n}{2} + 2} 1 \\
=& \varphi\left(\frac{n}{2}+4\right) - 2.
\end{align*}
To demonstrate that a pair $(n_1,n_2)$ corresponds to a meander if and only if $n_1+1$ and $n_2+1$ are coprime, we consider a meander with three cups as a curve in a disk with punctures. Such curves arose in the study of braid groups (see for example~\cite {D02}, \cite{M04}, \cite{DW07}), and in particular such curves have been used to develop efficient algorithms for comparing braids. We use a simple special case of such an algorithm. For completeness, we mention that these are particular cases of a more general technique introduced by Agol, Hass, and Thurston~\cite{AHT06}.

To a pair of numbers $(n_1, n_2)$ we associate a pair of words in the extended Dyck language as above. Using such a pair we can construct a set of curves in a disk (as was done for meanders). Let us connect the two free points in order to obtain a set of closed curves (the number of connected components would not change), which we denote by $X_{(n_1, n_2)}$ (see the example in Figure~\ref{fig: simplify X}). Without loss of generality, we can assume that $n_1 \geq n_2$. We consider several cases.

\begin{enumerate}
    \item If $n_1 = n_2 \neq 0$, then $X_{(n_1, n_2)}$ has several connected components, and therefore $(n_1, n_2)$ does not correspond to a meander.
    \item If $n_2 = 0$, then $X_{(n_1, n_2)}$ is a single curve. (This case does not correspond to an almost irreducible meander with three cups, but we will need this case for later analysis).
    \item In general case we can simplify $X_{(n_1, n_2)}$ by pulling $2n_2 + 2$ arcs resulting in $X_{(n_2, n_1-n_2-1)}$, see Figure~\ref{fig: simplify X}. Consequently, $X_{(n_1, n_2)}$ is a single curve if and only if $X_{(n_2, n_1-n_2-1)}$ is a single curve. That is, we almost get Euclid's algorithm: consider a sequence $\{n_i\}_{i=1,2,\dots}$ where for each $i>2$ $n_i = n_{i-2}\ \operatorname{mod}\ (n_{i-1} + 1)$. If $n_i = n_{i+1} \neq 0$ for some $i$, then $X_{(n_1, n_2)}$ has several connected components (according to the first case). Otherwise, $n_i = 0$ for some $i$ and $X_{(n_1, n_2)}$ is a single curve (according to the second case). The only thing left to note is that $\{n_i\}_{i=1,2,\dots}$ stabilizes with zeroes if and only if $n_1 + 1$ and $n_2 +1$ are coprime.
\end{enumerate}
\begin{figure}
    \centering
    \begin{tikzpicture}[scale = 0.2]
    \draw[thick] (-0.5, 0) to (23.5, 0);
    \begin{scope}
        \clip (-0.5,0) rectangle (23.5,11.6);
        \draw[thick] (7.5,0) circle (0.5);
        \draw[thick] (7.5,0) circle (1.5);
        \draw[thick] (7.5,0) circle (2.5);
        \draw[thick] (7.5,0) circle (3.5);
        \draw[thick] (7.5,0) circle (4.5);
        \draw[thick] (7.5,0) circle (5.5);
        \draw[thick] (7.5,0) circle (6.5);
        \draw[thick] (7.5,0) circle (7.5);

        \draw[thick] (16.5,0) circle (0.5);

        \draw[thick] (20.5,0) circle (0.5);
        \draw[thick] (20.5,0) circle (1.5);
        \draw[thick] (20.5,0) circle (2.5);
    \end{scope}
    \begin{scope}
        \clip (-0.5,0) rectangle (23.5,-11.6);
        \draw[thick] (11.5,0) circle (0.5);
        \draw[thick] (11.5,0) circle (1.5);
        \draw[thick] (11.5,0) circle (2.5);
        \draw[thick] (11.5,0) circle (3.5);
        \draw[thick] (11.5,0) circle (4.5);
        \draw[thick] (11.5,0) circle (5.5);
        \draw[thick] (11.5,0) circle (6.5);
        \draw[thick] (11.5,0) circle (7.5);
        \draw[thick] (11.5,0) circle (8.5);
        \draw[thick] (11.5,0) circle (9.5);
        \draw[thick] (11.5,0) circle (10.5);
        \draw[thick] (11.5,0) circle (11.5);
    \end{scope}
    \node (name) at (11.5, -14) {$X_{(8,3)}$};
\end{tikzpicture}
\begin{tikzpicture}[scale = 0.2]
    \begin{scope}
        \clip (-1.5,-15) rectangle (5,11.6);
        \draw[ultra thick, ->] (-0.5,0) to (4, 0);
    \end{scope}  
\end{tikzpicture}     
\begin{tikzpicture}[scale = 0.2]
    \draw[thick] (-0.5, 0) to (23.5, 0);
    \begin{scope}
        \clip (-0.5,0) rectangle (23.5,11.6);
        \draw[thick] (7.5,0) circle (0.5);
        \draw[thick] (7.5,0) circle (1.5);
        \draw[thick] (7.5,0) circle (2.5);
        \draw[thick] (7.5,0) circle (3.5);
        \draw[thick] (7.5,0) circle (4.5);
        \draw[thick] (7.5,0) circle (5.5);
        \draw[thick] (7.5,0) circle (6.5);
        \draw[thick] (7.5,0) circle (7.5);
    \end{scope}   
    \begin{scope}
        \clip (-0.5,0) rectangle (11.5,-11.6);
        \draw[thick] (11.5,0) circle (0.5);
        \draw[thick] (11.5,0) circle (1.5);
        \draw[thick] (11.5,0) circle (2.5);
        \draw[thick] (11.5,0) circle (3.5);
        \draw[thick] (11.5,0) circle (4.5);
        \draw[thick] (11.5,0) circle (5.5);
        \draw[thick] (11.5,0) circle (6.5);
        \draw[thick] (11.5,0) circle (7.5);
        \draw[thick] (11.5,0) circle (8.5);
        \draw[thick] (11.5,0) circle (9.5);
        \draw[thick] (11.5,0) circle (10.5);
        \draw[thick] (11.5,0) circle (11.5);
    \end{scope}
    \begin{scope}
        \clip (11.5,0) rectangle (23.5,-15.5);
        \draw[thick] (11.5,0) circle (0.5);
        \draw[thick] (11.5,0) circle (1.5);
        \draw[thick] (11.5,0) circle (2.5);        
        \draw[thick] (11.5,0) circle (3.5);

        \draw[thick] (11.5,-5) circle (0.5);

        \draw[thick] (11.5,-9) circle (0.5);
        \draw[thick] (11.5,-9) circle (1.5);
        \draw[thick] (11.5,-9) circle (2.5);
    \end{scope}
    \draw[ultra thick, dashed,  ->] (20.5, 2) to[out = -90, in = 0, looseness = 1.2] (15, -9);
    \draw[ultra thick, dashed,  ->] (16.5, 1) to[out = -90, in = 0, looseness = 1.2] (13, -5);
\end{tikzpicture}

\vspace{-1cm}
\begin{tikzpicture}[scale = 0.2]
    \begin{scope}
        \clip (-7,-2) rectangle (7,3);
        \draw[ultra thick, ->] (7,3) to (4, -1);
    \end{scope}  
\end{tikzpicture}   

\vspace{-0.3cm}
\begin{tikzpicture}[scale = 0.2]
    \draw[thick] (-0.5, 0) to (15.5, 0);
    \begin{scope}
        \clip (-0.5,0) rectangle (15.5,8.6);
        \draw[thick] (7.5,0) circle (0.5);
        \draw[thick] (7.5,0) circle (1.5);
        \draw[thick] (7.5,0) circle (2.5);
        \draw[thick] (7.5,0) circle (3.5);
        \draw[thick] (7.5,0) circle (4.5);
        \draw[thick] (7.5,0) circle (5.5);
        \draw[thick] (7.5,0) circle (6.5);
        \draw[thick] (7.5,0) circle (7.5);

    \end{scope}   
    \begin{scope}
        \clip (-0.5,0) rectangle (15.5,-5.6);
        \draw[thick] (2.5,0) circle (0.5);
        \draw[thick] (2.5,0) circle (1.5);
        \draw[thick] (2.5,0) circle (2.5);
        
        \draw[thick] (6.5,0) circle (0.5);
        
        \draw[thick] (11.5,0) circle (0.5);
        \draw[thick] (11.5,0) circle (1.5);
        \draw[thick] (11.5,0) circle (2.5);
        \draw[thick] (11.5,0) circle (3.5);
    \end{scope}
    \node (name) at (7.5, -5) {$X_{(4,3)}$};
\end{tikzpicture}
    \caption{An example of simplification of $X_{(n_1, n_2)}$.}
    \label{fig: simplify X}
\end{figure}
\end{proof}

\subsection{Asymptotics of the numbers of irreducible meanders} 
In this subsection, we show that the number of pairwise non-equivalent irreducible meanders with a fixed number of non-transverse intersections grows at most polynomially. By contrast, the growth rate of almost irreducible meanders is exponential.

 \begin{thm}\label{thm: irreducible meander growth rate}
    For a fixed natural number $k$ 
    $$\sum_{n=1}^N\Mirr_{n,k} = O(N^{2k-4}).$$
\end{thm}
To prove this theorem we need the following lemma.
\begin{lemma}\label{lem: meander numbers with cups}
 Let $k$ be a natural number greater than two, and let $\mathcal{B}_{n, k}$ be the number of pairwise non-equivalent non-singular meanders with precisely $n$ intersections and $k$ cups. Then 
    \begin{align*}
        &\mathcal{B}_{2n+1, k} \leq \mathcal{B}_{2n+2, k},\\
        &\mathcal{B}_{2n, k} \leq 2\mathcal{B}_{2n+1, k}.
    \end{align*}        
    \end{lemma}
\begin{proof}
    Let $M$ be a non-singular meander of total order $2n+1$ with precisely $k$ cups, and let $(\alpha_1, \dots, \alpha_{2n+1})$ be the permutation of $M$. If $\alpha_{2n+1} \neq 2n+1$ then a non-singular meander $M'$ with permutation $(\alpha_1, \dots, \alpha_{2n+1}, 2n+2)$ has the same number of cups. If $\alpha_{2n+1} = 2n+1$, then a non-singular meander $M'$ with permutation $(1, \alpha_{2n+1} + 1, \alpha_{2n} + 1, \dots, \alpha_{1} + 1)$ has the same number of cups. Clearly, for different $M$ we obtain different $M'$, and thus $\mathcal{B}_{2n+1, k} \leq \mathcal{B}_{2n+2, k}$.

    Let $M$ be a non-singular meander of total order $2n$ with precisely $k$ cups, and let $(\alpha_1, \dots, \alpha_{2n})$ be the permutation of $M$. We need to consider several cases.
    \begin{enumerate}
        \item Case $\alpha_{2n} \neq 2n$. Consider a non-singular meander $M'$ with permutation $(\alpha_1, \dots, \alpha_{2n}, 2n+1)$.
        \item Case $\alpha_{2n} = 2n$, but $\alpha_{1} \neq 1$. Consider a non-singular meander $M'$  with permutation $(\alpha_{2n}+1, \alpha_{2n-1}+1, \dots, \alpha_{1}+1, 1)$.
        \item Case $\alpha_{2n} = 2n$, $\alpha_{1} = 1$, but $\alpha_{2n-1}\neq 2n-1$. Consider a non-singular meander $M'$ with permutation $(1, \alpha_{2n-1}+1, \alpha_{2n-2}+1, \dots, \alpha_{1}+1, \alpha_{2n}+1)$.
        \item Case $\alpha_{2n} = 2n$, $\alpha_{1} = 1$, and $\alpha_{2n-1} = 2n-1$. Consider a non-singular meander $M'$ with permutation $(\alpha_{1}+2, \alpha_{2}+2, \dots, \alpha_{2n-1}+2, 2, 1)$.
    \end{enumerate}
    In all cases $M'$ is a non-singular meander of total order $2n+1$ and with precisely $k$ cups.
    The only possibilities where different $M$ can lead to equivalent $M'$ are cases 2 and 4. This means that each meander of total order $2n+1$ with precisely $k$ cups corresponds to at most two different meanders of total order $2n$ with precisely $k$ cups. It follows that $\mathcal{B}_{2n, k} \leq 2\mathcal{B}_{2n+1, k}$.
\end{proof}
\begin{proof}[Proof of Theorem~\ref{thm: irreducible meander growth rate}]    
    In~\cite{DGZZ20} it was proved that $\sum_{n=1}^{N}\mathcal{B}_{2n+1, k} = O(N^{2k-4})$.
    From Lemma~\ref{lem: meander numbers with cups} it follows that $\sum_{n=1}^N\mathcal{B}_{n, k} = O(N^{2k-4})$. It remains to note that $\Mirr_{n, k}\leq \mathcal{B}_{n+2k, k} $. 
\end{proof}
\begin{corollary} For each $k\geq 0$ 
    $$
    \lim_{n\to \infty} \frac{\Mirr_{n, k}}{\M_{n,k}} = 0.
    $$
\end{corollary}
\begin{proof}
     It is clear that $\M_{n,k} \geq \M_{n,0}$: indeed, starting from a non-singular meander with the permutation $(\alpha_1, \dots, \alpha_{n})$, we can construct a meander with the permutation  $(1, 2, \dots, k, k+\alpha_1, \dots, k+\alpha_{n})$, where intersections with the labels $1, 2,\dots, k$ are non-transverse. But $\M_{n,0}$ grows exponentially with $n$ (see~\cite{AP05}).
\end{proof}

We denote the number of all pairwise non-equivalent almost irreducible meanders of total order $n$ by $\A_n$. Note that $\A_n = \sum\limits_{r+2k=n}\Mirr_{r,k}$. In Figure~\ref{fig: growth rate plot} one can see the logarithm of the proportion of almost irreducible meanders among all non-singular meanders of total order less than $39$.

\begin{figure}
    \centering
    \includegraphics[width=\textwidth]{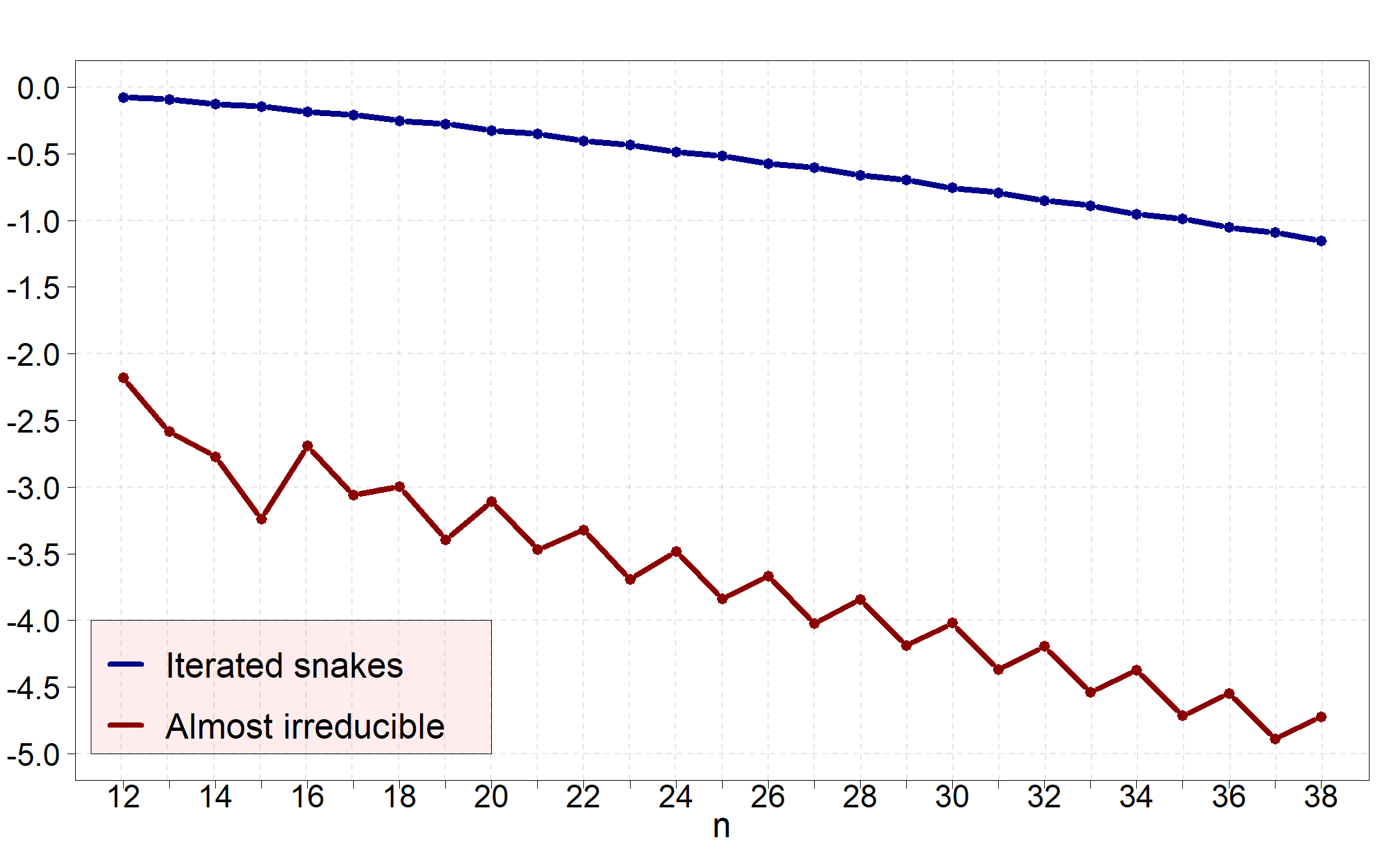}
    \caption{Base 10 logarithm of proportions of almost irreducible meanders and of non-singular iterated snakes among all non-singular meanders.}
    \label{fig: growth rate plot}
\end{figure}

The following estimates of the growth rate of $\A_n$ were obtained jointly with A.~Malyutin. The author is grateful for permission to include them in the present paper. 
\begin{thm}\label{thm: upper bound}
\begin{equation}
    \limsup_{n\to \infty} \sqrt[n]{\A_n} < 3.313385
\end{equation}
\end{thm}
\begin{proof}
    Let $M$ be an almost irreducible meander of total order $n$. Let us choose $\alpha \in (0;\,1)$ such that $\alpha n$ is a natural number, and let $\mu = (a_1^{s_1}; a_2^{s_2};\dots; a_k^{s_k})$ be the partition of $\alpha n$. We can choose $\sum_{i=1}^k s_i$ distinct intersections in $M$ and at each of them insert a non-singular inverse snake of total order $2a_i+1$. As a result, we obtain a non-singular meander of total order $n+\sum_{i=1}^r 2a_i = n+2\alpha n$. In this way, we obtain $\sum\limits_{\mu \vdash \alpha n} \binom{n}{\mu} = \binom{n+\alpha n-1}{\alpha n}$ non-equivalent non-singular meanders of total order $n+2\alpha n$ from a single almost irreducible meander of total order~$n$. According to the uniqueness of the decomposition, different almost irreducible meanders also lead to different meanders. We have the following inequalities:
\begin{align*}
    \binom{n+\alpha n-1}{\alpha n}\A_n &\leq \M_{n+2\alpha n, 0},\\
    \limsup_{n\to \infty}  \sqrt[n]{\binom{n+\alpha n-1}{\alpha n}\A_n} & \leq \lim_{n\to \infty} \sqrt[n]{\M_{n+2\alpha n, 0}} \\
    \frac{(1+\alpha)^{1+\alpha}}{\alpha^\alpha} \limsup_{n\to \infty} \sqrt[n]{\A_n}  &\leq
    \left(\lim_{n\to \infty} \sqrt[n]{\M_{n,0}}\right)^{1 + 2\alpha},   \\
    \limsup_{n\to \infty} \sqrt[n]{\A_n} &\leq \frac{\alpha^\alpha}{(1+\alpha)^{1+\alpha}} \left(\lim_{n\to \infty} \sqrt[n]{\M_{n,0}}\right)^{1+2\alpha}.
\end{align*}
    It is proved in \cite{AP05} that $\lim\limits_{n\to \infty} \sqrt[n]{\Bar{\M}_{n}}\leq 12.901$, where $\Bar{\M}_{n}$ is the number of pairwise non-equivalent closed meanders with precisely $2n$ intersections. It can be easily seen that $\M_{2n-1, 0} = \Bar{\M}_{n}$ and $\Bar{\M}_{n} \leq \M_{2n, 0} \leq n\Bar{\M}_{n}$ (see, for example,~\cite{C03} for details). From this it follows: $\lim\limits_{n\to \infty} \sqrt[n]{\M_{n, 0}} \leq \sqrt{12.901}$. 
    Now for each $\alpha \in (0;\,1)$ we have 
\begin{align}
\label{eq: lower bound}
     \limsup_{n\to \infty} \sqrt[n]{\A_n} &\leq \frac{\alpha^\alpha}{(1+\alpha)^{1+\alpha}} \left(12.901\right)^{\frac{1+2\alpha}{2}}.
\end{align}
    The function on the right side of equation~$\eqref{eq: lower bound}$ reaches the minimum at $\alpha \approx \frac{10}{119}$, where its value is approximately 3.313384. 
\end{proof}

\begin{corollary} 
    $$
    \lim_{n\to \infty} \frac{\A_n}{\M_{n,0}} = 0.
    $$
\end{corollary}
\begin{proof}
    From~\cite{AP05} it follows that $\lim\limits_{n\to \infty} \sqrt[n]{\M_{n, 0}} > 3.37$. It remains to be noted that $\limsup\limits_{n\to \infty} \sqrt[n]{\A_n}~<~\lim\limits_{n\to \infty} \sqrt[n]{\M_{n, 0}}.$
\end{proof}

\begin{thm}
\begin{equation}
    \liminf_{n\to \infty} \sqrt[n]{\A_n} > 1.83669.
\end{equation}
\end{thm}
\begin{proof}
    First, let us non-formally describe the main idea of the proof.
    Let $M$ be an arbitrary non-singular meander of total order $n$, where $n$ is odd (for even $n$ the idea is the same). We can construct almost irreducible meanders of orders $2n+20$ and $2n+23$ from $M$ by the following procedure. 
    Consider a meander $M_1$ that is a concatenation of a non-singular meander with the permutation $(7, 6, 1, 2, 5, 8, 9, 4, 3)$ and $M$ (see the example in Figure~\ref{fig: construct almost irreducible odd}(a), where $M$ is the meander with the permutation $(1,2,3,4,5)$). Next, we need to ``double'' $M_1$ (as in Figure~\ref{fig: construct almost irreducible odd}(b)) to obtain a meander $M_2$. Finally, we can transform $M_2$ into an almost irreducible meander $M_3$ of total order $2(n+9)+2$ by adding two more intersections between points with labels $14$ and $15$ (see Figure~\ref{fig: construct almost irreducible odd}(c)). We can also obtain an almost irreducible meander $M_4$ of odd total order in a similar way, see Figure~\ref{fig: construct almost irreducible odd}(d).

    \begin{figure}[h]
    \centering
    \begin{tikzpicture}[scale = 4.3]
    \node (a) at (0.5, -0.6) {(a)};
\draw[thick] (0, 0) to (1, 0);
\draw[ultra thick] (0.0248132, 0.155556) to[out = 0, in = 90, distance = 5.86431] (0.466667, 0)
 to[out = -90, in = -90, distance = 0.837758] (0.4, 0)
 to[out = 90, in = 90, distance = 4.18879] (0.0666667, 0)
 to[out = -90, in = -90, distance = 0.837758] (0.133333, 0)
 to[out = 90, in = 90, distance = 2.51327] (0.333333, 0)
 to[out = -90, in = -90, distance = 2.51327] (0.533333, 0)
 to[out = 90, in = 90, distance = 0.837758] (0.6, 0)
 to[out = -90, in = -90, distance = 4.18879] (0.266667, 0)
 to[out = 90, in = 90, distance = 0.837758] (0.2, 0)
 to[out = -90, in = -90, distance = 5.86431] (0.666667, 0)
 to[out = 90, in = 90, distance = 0.837758] (0.733333, 0)
 to[out = -90, in = -90, distance = 0.837758] (0.8, 0)
 to[out = 90, in = 90, distance = 0.837758] (0.866667, 0)
 to[out = -90, in = -90, distance = 0.837758] (0.933333, 0)
to[out = 90, in = 180, distance = 0.837758] (0.975187, 0.155556);
\draw[help lines] (0.5, 0) circle (0.5);
\draw[fill] (0.0248132, 0.155556) circle (0.0129099);
\draw[fill] (0.975187, 0.155556) circle (0.0129099);
\draw[fill] (0, 0) circle (0.0129099);
\draw[fill] (1, 0) circle (0.0129099);
\end{tikzpicture}
\hspace{0.7cm}
\begin{tikzpicture}[scale = 4.3]
    \node (a) at (0.5, -0.6) {(b)};
\draw[thick] (0, 0) to (1, 0);
\draw[ultra thick] (0.02285, 0.149425) to[out = 0, in = 90, distance = 5.6332] (0.448276, 0)
 to[out = -90, in = -90, distance = 0.433323] (0.413793, 0)
 to[out = 90, in = 90, distance = 4.76655] (0.0344828, 0)
 to[out = -90, in = -90, distance = 1.29997] (0.137931, 0)
 to[out = 90, in = 90, distance = 2.16662] (0.310345, 0)
 to[out = -90, in = -90, distance = 3.03326] (0.551724, 0)
 to[out = 90, in = 90, distance = 0.433323] (0.586207, 0)
 to[out = -90, in = -90, distance = 3.89991] (0.275862, 0)
 to[out = 90, in = 90, distance = 1.29997] (0.172414, 0)
 to[out = -90, in = -90, distance = 6.49985] (0.689655, 0)
 to[out = 90, in = 90, distance = 0.433323] (0.724138, 0)
 to[out = -90, in = -90, distance = 1.29997] (0.827586, 0)
 to[out = 90, in = 90, distance = 0.433323] (0.862069, 0)
 to[out = -90, in = -90, distance = 1.29997] (0.965517, 0)
 to[out = 90, in = 90, distance = 0.433323] (0.931034, 0)
 to[out = -90, in = -90, distance = 0.433323] (0.896552, 0)
 to[out = 90, in = 90, distance = 1.29997] (0.793103, 0)
 to[out = -90, in = -90, distance = 0.433323] (0.758621, 0)
 to[out = 90, in = 90, distance = 1.29997] (0.655172, 0)
 to[out = -90, in = -90, distance = 5.6332] (0.206897, 0)
 to[out = 90, in = 90, distance = 0.433323] (0.241379, 0)
 to[out = -90, in = -90, distance = 4.76655] (0.62069, 0)
 to[out = 90, in = 90, distance = 1.29997] (0.517241, 0)
 to[out = -90, in = -90, distance = 2.16662] (0.344828, 0)
 to[out = 90, in = 90, distance = 3.03326] (0.103448, 0)
 to[out = -90, in = -90, distance = 0.433323] (0.0689655, 0)
 to[out = 90, in = 90, distance = 3.89991] (0.37931, 0)
 to[out = -90, in = -90, distance = 1.29997] (0.482759, 0)
to[out = 90, in = 180, distance = 6.49985] (0.97715, 0.149425);
\draw[help lines] (0.5, 0) circle (0.5);
\draw[fill] (0.02285, 0.149425) circle (0.0129099);
\draw[fill] (0.97715, 0.149425) circle (0.0129099);
\draw[fill] (0, 0) circle (0.0129099);
\draw[fill] (1, 0) circle (0.0129099);
\end{tikzpicture}
\vspace{0.3cm}

\begin{tikzpicture}[scale = 4.3]
    \node (a) at (0.5, -0.6) {(c)};
\draw[thick] (0, 0) to (1, 0);
\draw[ultra thick] (0.0346123, 0.182796) to[out = 0, in = 90, distance = 5.26977] (0.419355, 0)
 to[out = -90, in = -90, distance = 0.405367] (0.387097, 0)
 to[out = 90, in = 90, distance = 4.45903] (0.0322581, 0)
 to[out = -90, in = -90, distance = 1.2161] (0.129032, 0)
 to[out = 90, in = 90, distance = 2.02683] (0.290323, 0)
 to[out = -90, in = -90, distance = 3.6483] (0.580645, 0)
 to[out = 90, in = 90, distance = 0.405367] (0.612903, 0)
 to[out = -90, in = -90, distance = 4.45903] (0.258065, 0)
 to[out = 90, in = 90, distance = 1.2161] (0.16129, 0)
 to[out = -90, in = -90, distance = 6.89124] (0.709677, 0)
 to[out = 90, in = 90, distance = 0.405367] (0.741935, 0)
 to[out = -90, in = -90, distance = 1.2161] (0.83871, 0)
 to[out = 90, in = 90, distance = 0.405367] (0.870968, 0)
 to[out = -90, in = -90, distance = 1.2161] (0.967742, 0)
 to[out = 90, in = 90, distance = 6.0805] (0.483871, 0)
 to[out = -90, in = -90, distance = 0.405367] (0.516129, 0)
 to[out = 90, in = 90, distance = 5.26977] (0.935484, 0)
 to[out = -90, in = -90, distance = 0.405367] (0.903226, 0)
 to[out = 90, in = 90, distance = 1.2161] (0.806452, 0)
 to[out = -90, in = -90, distance = 0.405367] (0.774194, 0)
 to[out = 90, in = 90, distance = 1.2161] (0.677419, 0)
 to[out = -90, in = -90, distance = 6.0805] (0.193548, 0)
 to[out = 90, in = 90, distance = 0.405367] (0.225806, 0)
 to[out = -90, in = -90, distance = 5.26977] (0.645161, 0)
 to[out = 90, in = 90, distance = 1.2161] (0.548387, 0)
 to[out = -90, in = -90, distance = 2.83757] (0.322581, 0)
 to[out = 90, in = 90, distance = 2.83757] (0.0967742, 0)
 to[out = -90, in = -90, distance = 0.405367] (0.0645161, 0)
 to[out = 90, in = 90, distance = 3.6483] (0.354839, 0)
 to[out = -90, in = -90, distance = 1.2161] (0.451613, 0)
to[out = 90, in = 180, distance = 6.89124] (0.965388, 0.182796);
\draw[help lines] (0.5, 0) circle (0.5);
\draw[fill] (0.0346123, 0.182796) circle (0.0129099);
\draw[fill] (0.965388, 0.182796) circle (0.0129099);
\draw[fill] (0, 0) circle (0.0129099);
\draw[fill] (1, 0) circle (0.0129099);
\end{tikzpicture}
\hspace{0.7cm}
\begin{tikzpicture}[scale = 4.3]
    \node (a) at (0.5, -0.6) {(d)};
\draw[thick] (0, 0) to (1, 0);
\draw[ultra thick] (0.0359937, 0.186275) to[out = 0, in = 90, distance = 5.54399] (0.441176, 0)
 to[out = -90, in = -90, distance = 0.369599] (0.411765, 0)
 to[out = 90, in = 90, distance = 4.80479] (0.0294118, 0)
 to[out = -90, in = -90, distance = 1.1088] (0.117647, 0)
 to[out = 90, in = 90, distance = 2.58719] (0.323529, 0)
 to[out = -90, in = -90, distance = 3.32639] (0.588235, 0)
 to[out = 90, in = 90, distance = 0.369599] (0.617647, 0)
 to[out = -90, in = -90, distance = 4.06559] (0.294118, 0)
 to[out = 90, in = 90, distance = 1.1088] (0.205882, 0)
 to[out = -90, in = -90, distance = 6.28318] (0.705882, 0)
 to[out = 90, in = 90, distance = 0.369599] (0.735294, 0)
 to[out = -90, in = -90, distance = 1.1088] (0.823529, 0)
 to[out = 90, in = 90, distance = 0.369599] (0.852941, 0)
 to[out = -90, in = -90, distance = 1.1088] (0.941176, 0)
 to[out = 90, in = 90, distance = 5.54399] (0.5, 0)
 to[out = -90, in = -90, distance = 0.369599] (0.529412, 0)
 to[out = 90, in = 90, distance = 4.80479] (0.911765, 0)
 to[out = -90, in = -90, distance = 0.369599] (0.882353, 0)
 to[out = 90, in = 90, distance = 1.1088] (0.794118, 0)
 to[out = -90, in = -90, distance = 0.369599] (0.764706, 0)
 to[out = 90, in = 90, distance = 1.1088] (0.676471, 0)
 to[out = -90, in = -90, distance = 5.54399] (0.235294, 0)
 to[out = 90, in = 90, distance = 0.369599] (0.264706, 0)
 to[out = -90, in = -90, distance = 4.80479] (0.647059, 0)
 to[out = 90, in = 90, distance = 1.1088] (0.558824, 0)
 to[out = -90, in = -90, distance = 2.58719] (0.352941, 0)
 to[out = 90, in = 90, distance = 3.32639] (0.0882353, 0)
 to[out = -90, in = -90, distance = 0.369599] (0.0588235, 0)
 to[out = 90, in = 90, distance = 4.06559] (0.382353, 0)
 to[out = -90, in = -90, distance = 1.1088] (0.470588, 0)
 to[out = 90, in = 90, distance = 6.28318] (0.970588, 0)
 to[out = -90, in = -90, distance = 9.97918] (0.176471, 0)
 to[out = 90, in = 90, distance = 0.369599] (0.147059, 0)
to[out = -90, in = 180, distance = 9.2] (0.9, -0.3);
\draw[help lines] (0.5, 0) circle (0.5);
\draw[fill] (0.0359937, 0.186275) circle (0.0129099);
\draw[fill] (0.9, -0.3) circle (0.0129099);
\draw[fill] (0, 0) circle (0.0129099);
\draw[fill] (1, 0) circle (0.0129099);
\end{tikzpicture}
    \caption{Constructing an almost irreducible meander for odd $n$.}
    \label{fig: construct almost irreducible odd}
\end{figure}

    Let us formalize this procedure. Let $M$ be an arbitrary non-singular meander of total order $n$ with permutation $(a_1,\dots,a_{n})$. If $n$ is odd, consider a non-singular meander $M'$ of total order $2n+20$ with permutation (recall that non-singular meanders are uniquely determined by their permutation):
\begin{align*}
    (&13,\ 12,\ 1,\ 4,\ 9,\ 18,\ 19,\ 8,\ 5, \\
     &20+2a_1,\ 20+2a_2 - 1,\ 20+2a_3,\ 20+2a_4 - 1,\ \dots,\ 20+2a_n,\\
     &15,\ 16,\\
     &20+2a_n-1,\ 20+2a_{n-1},\ 20+2a_{n-2}-1,\ 20+2a_{n-3},\ \dots,\ 20+2a_1-1,\\
     &6,\ 7,\ 20,\ 17,\ 10,\ 3,\ 2,\ 11,\ 14).
\end{align*}
The only submeanders of $M'$ are meanders of total order two (this follows from Lemma~\ref{lem: submeanders and permutations}), and thus $M'$ is almost irreducible. 
We also can consider non-singular meander $M''$ of total order $2n + 23$ with permutation 
\begin{align*}
    (&15,\ 14,\ 1,\ 4,\ 11,\ 20,\ 21,\ 10,\ 7, \\
     &22+2a_1,\ 22+2a_2 - 1,\ 22+2a_3,\ 22+2a_4-1,\ \dots,\ 22+2a_n,\\
     &17,\ 18,\\
     &22+2a_n-1,\ 22+2a_{n-1},\ 22+2a_{n-2}-1,\ 22+2a_{n-3},\ \dots,\ 22+2a_1-1,\\
     &8,\ 9,\ 22,\ 19,\ 12,\ 3,\ 2,\ 13,\ 16,\\
     &2n + 23,\ 6,\ 5).
\end{align*}
The same argument shows that $M''$ is also almost irreducible.

If $n$ is even, two non-singular meanders $M'$ and $M''$ with permutations 
\begin{align*}
    (&15,\ 14,\ 1,\ 4,\ 11,\ 18,\ 19,\ 10,\ 7, \\
     &20+2a_1,\ 20+2a_2-1,\ 20+2a_3,\ 20+2a_4-1,\ \dots,\ 20+2a_n-1,\\
     &6,\ 5,\\
     &20+2a_n,\ 20+2a_{n-1}-1,\ 20+2a_{n-2},\ 20+2a_{n-3}-1,\ \dots,\ 20+2a_1-1,\\
     &8,\ 9,\ 20,\ 17,\ 12,\ 3,\ 2,\ 13,\ 16)
\end{align*}
and 
\begin{align*}
    (&17,\ 16,\ 1,\ 4,\ 13,\ 20,\ 21,\ 12,\ 9, \\
     &22+2a_1,\ 22+2a_2-1,\ 22+2a_3,\ 22+2a_4-1,\ \dots,\ 22+2a_n-1,\\
     &8,\ 7,\\
     &22+2a_n,\ 22+2a_{n-1}-1,\ 22+2a_{n-2},\ 22+2a_{n-3}-1,\ \dots,\ 22+2a_1-1,\\
     &10,\ 11,\ 22,\ 19,\ 14,\ 3,\ 2,\ 15,\ 18,\\
     &2n + 23,\ 6,\ 5)
\end{align*}
are almost irreducible of total order $2n+20$ and $2n+23$ respectively.

Thus we have the following inequalities
\begin{align*}
    \liminf_{n\to \infty} \sqrt[n]{\A_n} \geq \lim_{n\to \infty} \sqrt[n]{\M_{\frac{n-23}{2}, 0}} = \sqrt{\lim_{n\to \infty} \sqrt[n]{\M_{n,0}}}.
\end{align*}

The results of~\cite{AP05} imply that $\lim\limits_{n\to \infty} \sqrt[n]{{\M}_{n, 0}}\geq \sqrt{11.38}$, and we finally get
\begin{equation*}
     \liminf_{n\to \infty} \sqrt[n]{\A_n} \geq \sqrt[4]{11.38} \approx 1.83669.
\end{equation*}
\end{proof}

\bibliography{biblio}

@article{A88, 
    title={A branched covering of $\mathbb{C}{P}^2\to {S}^4$, hyperbolicity and projectivity topology}, 
    volume={29}, 
    ISSN={1573-9260}, 
    DOI={10.1007/BF00970265}, 
    number={5}, 
    journal={Siberian Mathematical Journal}, 
    author={Arnol’d, V.}, 
    year={1988}, 
    pages={717--726}, 
    language={en}
}

@article{AHT06,
  title={The computational complexity of knot genus and spanning area},
  author={Agol, I. and Hass, J. and Thurston, W.},
  journal={Transactions of the American Mathematical Society},
  volume={358},
  number={9},
  pages={3821--3850},
  year={2006}
}

@article {AP05,
    AUTHOR = {Albert, M. and Paterson, M.},
     TITLE = {Bounds for the growth rate of meander numbers},
   JOURNAL = {J. Combin. Theory Ser. A},
  FJOURNAL = {Journal of Combinatorial Theory. Series A},
    VOLUME = {112},
      YEAR = {2005},
    NUMBER = {2},
     PAGES = {250--262},
      ISSN = {0097-3165},
   MRCLASS = {05A15 (05A16)},
  MRNUMBER = {2177485},
MRREVIEWER = {Reinhard O. W. Franz},
       DOI = {10.1016/j.jcta.2005.02.006},
}

@misc{Bcode,
  title = {{Meander factorization}},
  author={Belousov, Yu.},
  howpublished  = {\url{https://github.com/YuryBelousov/meander_factorization}}
}

@article{B12,
  title={An operad for splicing},
  author={Budney, R.},
  journal={Journal of Topology},
  volume={5},
  number={4},
  pages={945--976},
  year={2012},
  publisher={Wiley Online Library}
}

@article{BS10, 
    title={A fast algorithm to generate open meandric systems and meanders}, 
    volume={6}, 
    ISSN={1549-6325}, 
    DOI={10.1145/1721837.1721858}, 
    number={2}, 
    journal={ACM Transactions on Algorithms}, 
    author={Bobier, B. and Sawada, J.}, 
    year={2010}, 
    pages={42:1-42:12} 
}

@article{C03,
  title={Approaches to the enumerative theory of meanders},
  author={La Croix, M.},
  journal={Master’s Essay, University of Waterloo},
  year={2003}
}

@article{CFMRR07, 
    title={Combinatorial aspects of ${L}$-convex polyominoes}, 
    volume={28}, 
    ISSN={01956698}, 
    DOI={10.1016/j.ejc.2006.06.020}, 
    number={6}, 
    journal={European Journal of Combinatorics},
    author={Castiglione, G. and Frosini, A. and Munarini, E. and Restivo, A. and Rinaldi, S.}, 
    year={2007}, 
    pages={1724--1741}
}

@article{D02,
  title={On a {Y}ang--{B}axter map and the {D}ehornoy ordering},
  author={Dynnikov, I.},
  journal={Russian Mathematical Surveys},
  volume={57},
  number={3},
  pages={592},
  year={2002},
  publisher={IOP Publishing}
}

@article {DGZZ20,
    AUTHOR = {Delecroix, V. and Goujard, \'{E}. and Zograf, P. and
              Zorich, A.},
     TITLE = {Enumeration of meanders and {M}asur-{V}eech volumes},
   JOURNAL = {Forum Math. Pi},
  FJOURNAL = {Forum of Mathematics. Pi},
    VOLUME = {8},
      YEAR = {2020},
     PAGES = {e4, 80},
   MRCLASS = {32G15 (05C30 30F30 57M50)},
  MRNUMBER = {4079755},
MRREVIEWER = {Caglar Uyanik},
       DOI = {10.1017/fmp.2020.2},
}

@article {DFGG97,
    AUTHOR = {Di Francesco, P. and Golinelli, O. and Guitter, E.},
     TITLE = {Meanders and the {T}emperley--{L}ieb algebra},
   JOURNAL = {Comm. Math. Phys.},
  FJOURNAL = {Communications in Mathematical Physics},
    VOLUME = {186},
      YEAR = {1997},
    NUMBER = {1},
     PAGES = {1--59},
      ISSN = {0010-3616},
   MRCLASS = {82B41 (05A15)},
  MRNUMBER = {1462755},
MRREVIEWER = {Francois Vanderseypen},
       DOI = {10.1007/BF02885671},
}

@article{DFGG00,
  title={Meanders: exact asymptotics},
  author={Di Francesco, P. and Golinelli, O. and Guitter, E.},
  journal={Nuclear Physics B},
  volume={570},
  number={3},
  pages={699--712},
  year={2000},
  publisher={Elsevier}
}

@article {DW07,
    AUTHOR = {Dynnikov, I. and Wiest, B.},
     TITLE = {On the complexity of braids},
   JOURNAL = {J. Eur. Math. Soc. (JEMS)},
  FJOURNAL = {Journal of the European Mathematical Society (JEMS)},
    VOLUME = {9},
      YEAR = {2007},
    NUMBER = {4},
     PAGES = {801--840},
      ISSN = {1435-9855},
   MRCLASS = {20F36 (20F65 57M25)},
  MRNUMBER = {2341833},
MRREVIEWER = {J. S. Birman},
       DOI = {10.4171/JEMS/98},
}

@article {FE02,
    AUTHOR = {Franz, R. and Earnshaw, B.},
     TITLE = {A constructive enumeration of meanders},
   JOURNAL = {Ann. Comb.},
  FJOURNAL = {Annals of Combinatorics},
    VOLUME = {6},
      YEAR = {2002},
    NUMBER = {1},
     PAGES = {7--17},
      ISSN = {0218-0006},
   MRCLASS = {05A15 (05A18)},
  MRNUMBER = {1923083},
MRREVIEWER = {Alexander Zvonkin},
       DOI = {10.1007/s00026-002-8026-z},
}

@article{FR91, 
    author={Fusco, G. and Rocha, C.}, 
    title={A permutation related to the dynamics of a scalar parabolic {PDE}}, 
    volume={91}, 
    ISSN={0022-0396}, 
    DOI={10.1016/0022-0396(91)90134-U}, 
    number={1}, 
    journal={Journal of Differential Equations}, 
    year={1991}, 
    pages={111--137}
}

@article{G54, 
    title={Checker Boards and Polyominoes},
    volume={61},
    ISSN={0002-9890, 1930-0972},
    DOI={10.1080/00029890.1954.11988548},
    number={10},
    journal={The American Mathematical Monthly},
    author={Golomb, S.}, 
    year={1954}, 
    pages={675--682},
    language={en}
}

@book{G18,
  title={Nonsymmetric operads in combinatorics},
  author={Giraudo, S.},
  year={2018}, 
  publisher={Springer}
}

@article {J00,
    AUTHOR = {Jensen, I.},
     TITLE = {A transfer matrix approach to the enumeration of plane meanders},
   JOURNAL = {J. Phys. A},
  FJOURNAL = {Journal of Physics. A. Mathematical and General},
    VOLUME = {33},
      YEAR = {2000},
    NUMBER = {34},
     PAGES = {5953--5963},
      ISSN = {0305-4470},
   MRCLASS = {05A15},
  MRNUMBER = {1784950},
       DOI = {10.1088/0305-4470/33/34/301},
}

@article{KS91, 
    title={A combinatorial matrix in 3-manifold theory},
    volume={149},
    ISSN={0030-8730, 0030-8730},
    DOI={10.2140/pjm.1991.149.319},
    number={2},
    journal={Pacific Journal of Mathematics},
    author={Ko, K. H. and Smolinsky, L.}, 
    year={1991}, 
    pages={319--336}, 
    language={en}
}

@misc{L22,
  title         = {On meandric permutations}, 
  author        = {V. Lopatkin},
  year          =  {2022},
  Note = {arXiv 2208.09752},
  archivePrefix = {arXiv},
  primaryClass  = {math.AT},
  eprint        = {2208.09752}
}

@book {LV12,
    AUTHOR = {Loday, J.--L. and Vallette, B.},
     TITLE = {Algebraic operads},
    SERIES = {Grundlehren der mathematischen Wissenschaften [Fundamental Principles of Mathematical Sciences]},
    VOLUME = {346},
 PUBLISHER = {Springer, Heidelberg},
      YEAR = {2012},
     PAGES = {xxiv+634},
      ISBN = {978-3-642-30361-6},
   MRCLASS = {18D50 (16E99)},
  MRNUMBER = {2954392},
MRREVIEWER = {Andrey Yu. Lazarev},
       DOI = {10.1007/978-3-642-30362-3},
}

@article {LZ92,
    AUTHOR = {Lando, S. and Zvonkin, A.},
     TITLE = {Meanders},
      NOTE = {Selected translations},
   JOURNAL = {Selecta Math. Soviet.},
  FJOURNAL = {Selecta Mathematica Formerly Sovietica},
    VOLUME = {11},
      YEAR = {1992},
    NUMBER = {2},
     PAGES = {117--144},
      ISSN = {0272-9903},
   MRCLASS = {05A15 (58C35)},
  MRNUMBER = {1166624},
MRREVIEWER = {Ira Gessel},
}

@article{M04,
  title={Fast algorithms for identification and comparison of braids},
  author={Malyutin, A.},
  journal={Journal of Mathematical Sciences},
  volume={119},
  pages={101--111},
  year={2004},
  publisher={Springer}
}

@article {R86,
    AUTHOR = {Read, R. C.},
     TITLE = {On the enumeration of a class of plane multigraphs},
   JOURNAL = {Aequationes Math.},
  FJOURNAL = {Aequationes Mathematicae},
    VOLUME = {31},
      YEAR = {1986},
    NUMBER = {1},
     PAGES = {47--63},
      ISSN = {0001-9054},
   MRCLASS = {05C30},
  MRNUMBER = {857687},
MRREVIEWER = {E. M. Palmer},
       DOI = {10.1007/BF02188172},
}

@article {R84,
    author = {P. Rosenstiehl},
    Title = {Planar permutations defined by two intersecting {J}ordan curves},
    Journal = {Graph Theory and Combinatorics}, 
    publisher = {Academic Press, London}, 
    pages = {259--271},
    year = {1984}
}

@book{S99, 
    address={Cambridge}, 
    series={Cambridge Studies in Advanced Mathematics}, 
    title={Enumerative Combinatorics}, 
    volume={2},  
    publisher={Cambridge University Press},
    author={Stanley, R.}, 
    year={1999}, 
    collection={Cambridge Studies in Advanced Mathematics}
}

@article {SL12,
    AUTHOR = {Sawada, J. and Li, R.},
     TITLE = {Stamp foldings, semi-meanders, and open meanders: fast
              generation algorithms},
   JOURNAL = {Electron. J. Combin.},
  FJOURNAL = {Electronic Journal of Combinatorics},
    VOLUME = {19},
      YEAR = {2012},
    NUMBER = {2},
     PAGES = {Paper 43, 16},
   MRCLASS = {05A05 (68P05)},
  MRNUMBER = {2946101},
}

@book{T22,
  title={Braids and Dynamics},
  author={Thiffeault, J.--L.},
  volume={9},
  year={2022},
  publisher={Springer Nature}
}

@book {Y16,
    AUTHOR = {Yau, D.},
     TITLE = {Colored operads},
    SERIES = {Graduate Studies in Mathematics},
    VOLUME = {170},
 PUBLISHER = {American Mathematical Society, Providence, RI},
      YEAR = {2016},
     PAGES = {xxviii+428},
      ISBN = {978-1-4704-2723-8},
   MRCLASS = {18D50 (05-01 18-01 18D10 18G55)},
  MRNUMBER = {3444662},
MRREVIEWER = {Murray R. Bremner},
       DOI = {10.1090/gsm/170},
}

@book{Y21,
  title={Infinity operads and monoidal categories with group equivariance},
  author={Yau, D.},
  year={2021},
  publisher={World Scientific}
}

@article{Z23, 
    title={Meanders: A personal perspective to the memory of {P}ierre {R}osenstiehl}, 
    ISSN={0195-6698}, 
    DOI={10.1016/j.ejc.2023.103817},  
    journal={European Journal of Combinatorics}, 
    author={Zvonkin, A.}, 
    year={2023},
}

@misc{oeis,
    Author = {{OEIS Foundation Inc.}},
    Note = {Published electronically at \url{http://oeis.org}},
    Title = {The {O}n-{L}ine {E}ncyclopedia of {I}nteger {S}equences},
    }
\bibliographystyle{alpha}

\newpage
\appendix
\section{Table with meander numbers} \label{app: table}
\begin{table}[h!]
\begin{tabular}{c|ccc}
Order $n$ & $\M_{n,0}$ & $\MIS_{n,0}$ & $\A_n$ \\
\hline
1 & 1 & 0 & 0\\
2 & 1 & 1 & 0\\
3 & 2 & 2 & 0\\
4 & 3 & 3 & 0\\
5 & 8 & 8 & 0\\
6 & 14 & 14 & 0\\
7 & 42 & 42 & 0\\
8 & 81 & 79 & 2\\
9 & 262 & 252 & 2\\
10 & 538 & 494 & 0\\
11 & 1828 & 1636 & 0\\
12 & 3926 & 3294 & 26\\
13 & 13820 & 11188 & 36\\
14 & 30694 & 22952 & 52\\
15 & 110954 & 79386 & 64\\
16 & 252939 & 165127 & 516\\
17 & 933458 & 579020 & 816\\
18 & 2172830 & 1217270 & 2186\\
19 & 8152860 & 4314300 & 3296\\
20 & 19304190 & 9146746 & 15054\\
21 & 73424650 & 32697920 & 24946\\
22 & 176343390 & 69799476 & 84090\\
23 & 678390116 & 251284292 & 138352\\
24 & 1649008456 & 539464358 & 544652\\
25 & 6405031050 & 1953579240 & 934450\\
26 & 15730575554 & 4214095612 & 3377930\\
27 & 61606881612 & 15336931928 & 5831520\\
28 & 152663683494 & 33218794236 & 22075152\\
29 & 602188541928 & 121416356108 & 38959552\\
30 & 1503962954930 & 263908187100 & 143815358\\
31 & 5969806669034 & 968187827834 & 256128664\\
32 & 15012865733351 & 2110912146295 & 959463704\\
33 & 59923200729046 & 7769449728780 & 1732188588\\
34 & 151622652413194 & 16985386737830 & 6440145162\\
35 & 608188709574124 & 62696580696172 & 11727449592\\
36 & 1547365078534578 & 137394914285538 & 43825381338\\
37 & 6234277838531806 & 508451657412496 & 80571300722\\
38 & 15939972379349178 & 1116622717709012 & 300477174306\\
\end{tabular}
\caption{Meander numbers.}
\label{tab: numeric results}
\end{table}
\end{document}